\DeclarePairedDelimiter{\norm}{\lVert}{\rVert} 
\DeclarePairedDelimiter{\abs}{\lvert}{\rvert} 
\DeclareMathOperator{\diff}{d\!}
\DeclareMathOperator{\degree}{deg}
\newcommand{\suchthat}{\ifnum\currentgrouptype=16 \mathrel{}\middle|\mathrel{}\else\mid\fi}
\def \R {{\mathbb R}}
\newcommand{\EXP} [1] {\textup{e}^{#1}}
\newcommand{\I} [0] {\dot{\iota}}
\newtheorem{theorem}{Theorem}
\newtheorem{lemma}[theorem]{Lemma}
\newtheorem{proposition}[theorem]{Proposition}
\newtheorem{remark}[theorem]{Remark}
\begin{document}

\title{\LARGE \bf MID Property for Delay Systems: Insights on Spectral Values with Intermediate Multiplicity}

\author{Islam Boussaada$^{\ast \dagger}$ \and Guilherme Mazanti$^{\ast}$ \and Silviu-Iulian Niculescu$^{\ast}$ \and Amina Benarab$^{\ast \dagger}$
\thanks{$^{\ast}$The authors are with Universit\'e Paris-Saclay, CNRS, CentraleSup\'elec, Inria, Laboratoire des Signaux et Syst\`emes, 91190, Gif-sur-Yvette, France; Corresponding author: \texttt{guilherme.mazanti@inria.fr}}%
\thanks{$^{\dagger}$IB \& AB are also with IPSA Paris, 94200 Ivry sur Seine, France.}%
}

\maketitle

\begin{abstract}   
This paper focuses on the problem of \emph{multiplicity induced dominancy} (MID) for a class of linear time-invariant systems represented by delay-differential equations. If the problem of generic MID was characterized in terms of properties of the roots of Kummer hypergeometric functions, the case of \emph{intermediate} MID is still an open problem. The aim of this paper is to address such a problem by using the Green--Hille transformation for characterizing the distribution of the nonasymptotic zeros of linear combinations of Kummer functions. An illustrative example completes the presentation and shows the effectiveness of the proposed methodology.
\end{abstract}

\section{Introduction}

A common feature in modeling transport and propagation phenomena, signal transmission in communication networks, or age structure in population dynamics is their time heterogeneity, and one of the simplest way to represent such processes and/or phenomena is by using \emph{delays\/} in their mathematical models. For further examples, we refer to \cite{Stepan1989Retarded,mcdo:89-bio,Michiels2014Stability,Insperger2011Semi,els:73,kol-nosov:86} and the references therein. As pointed out in \cite{Sipahi2011Stability}, the presence of delay in the system's dynamics may have a dichotomic effect and a lot of methods and techniques have been proposed in the open literature to address these stability issues (see, e.g., \cite{Avellar1980Zeros, Cruz1970Stability, Diekmann1995Delay, Gu2003Stability, Hale1993Introduction, Hale2003Stability, Hayes, Henry1974Linear, Michiels2014Stability,Pinney1958Ordinary}).  

In the linear time-invariant (LTI) systems whose dynamics are represented by delay-differential equations (DDEs), we have a particular interesting property, called \emph{multiplicity induced dominancy (MID)\/} that, to the best of the authors' knowledge, was not sufficiently addressed in the open literature. More precisely, the MID property simply says that the characteristic root of maximal multiplicity is the rightmost root of the spectrum, i.e., all other roots are located to its left in the complex plane. In other words, this characteristic root of maximal multiplicity is nothing else than the \emph{spectral abscissa\/}\footnote{A deeper discussion on spectral abscissa can be found in \cite{Michiels2014Stability}.} of the dynamical system. As pointed out in \cite{MBN-2021-JDE,BMN-2022-CRAS}, this property opens an interesting perspective in control through the so-called \emph{partial pole placement\/} method with guarantees on the spectrum location of the remaining characteristic roots. Further discussions on existing methods for characterizing multiple roots can be found in \cite{NBLMM21-tds}.

The case of the maximal allowable multiplicity\footnote{i.e., the quasipolynomial degree} of a characteristic root, called \emph{generic MID property\/}, was addressed and completely characterized in \cite{MBN-2021-JDE} (retarded case) and in \cite{BMN-2022-CRAS} (unifying retarded and neutral cases) for LTI DDEs including a \emph{single delay\/} in their models. The proposed arguments to prove such a property are based on some analytical properties of Kummer and Whittaker confluent hypergeometric functions, which turn out to be essential for the factorization of the corresponding characteristic functions. It should be noted that such a method cannot be extended straightforwardly to the case of \emph{spectral values with intermediate multiplicity}, a fact that represents a drawback of the method. However, as shown in \cite{balogh2021conditions} by using different arguments that exploit the structure of the system, the MID property holds also in some non-generic cases, i.e., for spectral values with intermediate multiplicities. In tuning PID controllers, a different argument was proposed in \cite{MBCBNC-Automatica-2022} for some class of unstable systems. Unfortunately, extending the methods in \cite{balogh2021conditions, MBCBNC-Automatica-2022} to a more general setting is not a trivial task. Finally, for similar problems, different parameter-based methods have been discussed in \cite{VMG13-cst,RMGS15-tac} when using the delays as control parameters. To summarize, to the best of the authors' knowledge, there does not exist any systematic approach to prove dominancy of the characteristic roots with intermediate multiplicity. 

The aim of this paper is to address such a problem and to outline the ideas of a new method that could also encompass intermediate multiplicities. More precisely, the contribution of the paper is threefold. First, we provide conditions under which spectral values with intermediate multiplicity are dominant. Second, the method used to obtain such conditions also has an interest on itself: we make use of the Green--Hille (integral) transformation introduced by Hille one century ago \cite{hille1922} for characterizing the location of the nonasymptotic zeros of Whittaker hypergeometric functions. It should be noted that these ideas complete the previous approaches based on the properties of Kummer hypergeometric functions to handle \emph{generic MID\/} in the retarded and neutral cases (see, e.g., \cite{MBN-2021-JDE,BMN-2022-CRAS}). To the best of the authors' knowledge, such a method represents a novelty in the open literature. Finally, as a byproduct of the analysis, new insights in MID control of the dynamics of a pendulum are proposed. In fact, we explore some of the existing links between the intermediate MID and the spectral abscisa optimization problem.

The remaining of the paper is organized as follows. Spectral bounds for retarded systems, a motivating example, prerequisites, and the formulation of the problem considered in the paper are presented in Section~\ref{sec:prob-MID}. The main results are derived in Section~\ref{sec:MID-int-mult}, where we first establish second-order equations for a linear combination of two Kummer functions and for a corresponding Whittaker-type function, before proving the MID property for an intermediate multiplicity and providing discussions on frequency bounds in the right-half plane for the spectra of dynamical systems represented by DDEs. An illustrative example is discussed in Section~\ref{sec:exm-MID-int-mult}, and some concluding remarks in Section~\ref{sec:conclusion} end the paper.

\emph{Notations.} Throughout the paper, the following notations are used: $\mathbb N^\ast$, $\mathbb R$, $\mathbb C$ denote the sets of positive integers, real numbers, and complex numbers, respectively, and we set $\mathbb N = \mathbb N^\ast \cup \{0\}$. The set of all integers is denoted by $\mathbb Z$ and, for $a, b \in \mathbb R$, we denote $\llbracket a, b\rrbracket = [a, b] \cap \mathbb Z$, with the convention that $[a, b] = \emptyset$ if $a > b$. For a complex number $\lambda$, $\Re(\lambda)$ and $\Im(\lambda)$ denote its real and imaginary parts, respectively. The open left and right complex half-planes are the sets $\mathbb C_-$ and $\mathbb C_+$, respectively, defined by $\mathbb C_- = \{\lambda \in \mathbb C \suchthat \Re(\lambda) < 0\}$ and $\mathbb C_+ = \{\lambda \in \mathbb C \suchthat \Re(\lambda  ) > 0\}$. For $\alpha \in \mathbb C$ and $k \in \mathbb N$, $(\alpha)_k$ is the \emph{Pochhammer symbol} for the \emph{ascending factorial}, defined inductively as $(\alpha)_0 = 1$ and $(\alpha)_{k+1} = (\alpha+k) (\alpha)_k$.

\section{Prerequisites and Problem Formulation}
\label{sec:prob-MID}

Consider the LTI dynamical system described by the DDE
\begin{equation}
\label{eq:DDE}
y^{(n)}(t) + \sum_{k=0}^{n-1} a_k y^{(k)}(t) + \sum_{k=0}^m \alpha_k y^{(k)}(t - \tau) = 0,
\end{equation}
under appropriate initial conditions, where $y(\cdot)$ is the real-valued unknown function, $\tau > 0$ is the delay, and $a_0,\allowbreak \dotsc,\allowbreak a_{n-1},\allowbreak \alpha_0,\allowbreak \dotsc,\allowbreak \alpha_{m}$ are real coefficients.
The DDE \eqref{eq:DDE} is said to be of \emph{retarded type}\footnote{in the case when the highest order of derivation appears only in the non-delayed term $y^{(n)}(t)$.} if $m < n$, or of \emph{neutral type} if $m = n$. We refer to \cite{Hale1993Introduction, Michiels2014Stability} for a deeper discussions on DDEs and related results and properties.

Notice that \eqref{eq:DDE} is a particular case of the time-delay system
\begin{equation}\label{eq:DDEM}
\dot \xi(t)+B_\tau\dot \xi(t-\tau)=A_{0}\xi(t)+A_{\tau} \xi(t-\tau),
\end{equation}
where $\xi(t) = (y(t),\,y'(t),\,\ldots,y^{(n-1)}(t))^{T}\in \mathbb{R}^n$ is the state vector and $A_0,\,A_\tau,\,B_\tau\in \mathcal{M}_n(\mathbb{R})$ are real-valued matrices which can be easily constructed from \eqref{eq:DDE}.

Consider a positive integer $n_p\in \mathbb{N}^*$, and an open set $\mathcal{O}\in\mathbb{R}^{n_p}$. For a set of parameters $\Vec p\in\mathcal{O}$, assume that the coefficients of the DDEs  \eqref{eq:DDE} $a_k$ ($k\in\llbracket 0, n-1\rrbracket$) and $\alpha_k$ ($k\in\llbracket 0, m\rrbracket$) are sufficiently smooth functions depending on the parameters $\Vec p$. Assume further that the delay $\tau:\mathcal{O} \to \mathbb{R}_+$ is a sufficiently smooth, nonnegative, and bounded function for all the parameters $ \Vec{p}\in\mathcal{O}$. Then the characteristic function associated with \eqref{eq:DDE} is the quasipolynomial $\Delta: \mathbb{C} \times \mathcal{O} \to \mathbb{C}$ defined by
\begin{equation}
\label{eq:Delta-P0-Ptau}
\Delta(\lambda, \Vec p)=P_0(\lambda, \Vec p)+P_{\tau}(\lambda, \Vec p)
\EXP{-\lambda\tau(\Vec p)},
\end{equation}
where $P_0$ and $P_{\tau}$ are the polynomials with real coefficients given by
\begin{equation}
\label{eq:quasi-PQ}
P_0(\lambda, \Vec p)=\lambda^n+\sum_{k=0}^{n-1} a_k(\Vec p)\lambda^k,\quad P_{\tau}(\lambda)=\sum_{k=0}^{m} \alpha_k(\Vec p)\lambda^k.
\end{equation}
Such a parameter vector $\Vec p$ may collect the delay $\tau$ and all $n+m+1$ coefficients of the DDE \eqref{eq:DDE} or it may define some particular structure of the coefficients of the polynomials $P_0$ and $P_\tau$, some particular dependence between the coefficients of the polynomials and the delay, or it may reflect the way the controller's gains appear in the characteristic function of the closed-loop system. 
For the sake of simplicity, in most of the cases, if no ambiguity, we will simply use $\Delta(\lambda)$, $P_0(\lambda)$, and $P_\tau(\lambda)$. 

It is well-known that the exponential stability of the trivial solution of \eqref{eq:DDE} can be described by the location of the (infinitely many) \emph{characteristic roots\/} of $\Delta$ (see, e.g., \cite{BC63,Michiels2014Stability}).

The \emph{degree} of the quasipolynomial $\Delta$ from \eqref{eq:Delta-P0-Ptau} is the integer $\degree(\Delta) = n + m + 1$. As discussed in \cite{BN-ACAP-2016}, this integer, which is larger than the degrees of the polynomials $P_0$ ($\degree(P_0)=n$) and $P_\tau$ ($\degree(P_\tau)=m$), is nothing else than the integer appearing in the P\'olya--Szeg\H{o} bound from \cite[Part Three, Problem~206.2]{Polya1998Problems}, and also corresponds to the maximal allowable multiplicity that a characteristic root of \eqref{eq:Delta-P0-Ptau}--\eqref{eq:quasi-PQ} may have. In addition, a characteristic root reaching such a bound is necessarily real.

\begin{remark}
\label{remk:imaginary}
On the imaginary axis, the characteristic roots of the quasipolynomial $\Delta$ defined by \eqref{eq:Delta-P0-Ptau} admit a bounded frequency, i.e., a bounded imaginary part. Indeed, any imaginary root $\lambda_0=\I\,\omega_0$ of $\Delta$  necessarily  satisfies
\begin{equation*}
    \abs{P_0(\I\,\omega_0)}^2=\abs{P_{\tau}(\I\,\omega_0)}^2.
\end{equation*}
The function $\mathcal{F}$ defined by $\mathcal{F}(\omega)=\abs{P_0(\I\,\omega)}^2-\abs{P_{\tau}(\I\,\omega)}^2$ is a polynomial on $\omega$ with real coefficients, and thus all its positive roots can be bounded in terms of its coefficients (see, for instance, \cite{Marden:49}). However, such an observation does not provide insights on frequency bounds for other roots, in particular roots on $\mathbb{C}_+$.
\end{remark}

\subsection{Spectrum Distribution for Retarded Delay Systems}
Despite the fact that the characteristic function of some DDE has an infinite number of characteristic roots, retarded systems, that is \eqref{eq:DDE} with $m<n$ or, equivalently, \eqref{eq:DDEM} with $B_\tau=0$, admit finitely many roots on any vertical strip in the complex plane \cite[Chapter~1, Lemma~4.1]{Hale1993Introduction}. 

Several general results on the location of roots of \eqref{eq:Delta-P0-Ptau} can be found in the literature and, in particular, we refer the interested reader to \cite{Boese-98} for generic result on the location of associated spectral values for arbitrary $n$. The next proposition collects two interesting properties, whose proofs can be found, respectively, in \cite{Michiels2014Stability} and \cite{partington2004h}.

\begin{proposition}\label{ENVELOPE}
Consider the LTI system \eqref{eq:DDE}, the corresponding system \eqref{eq:DDEM}, and their characteristic quasipolynomial $\Delta$ given by \eqref{eq:Delta-P0-Ptau}--\eqref{eq:quasi-PQ}.
\begin{enumerate}
    \item If $m<n$ and $\lambda$ is a characteristic root of system \eqref{eq:DDEM} with $B_\tau = 0$, then it satisfies
 \begin{equation}\label{EN1}
\abs{\lambda} \leq \norm{A_{0}+A_{\tau}\,\EXP{-\tau \lambda}}_{2}.
\end{equation}
 \item If $m=n$ and $\displaystyle \lim_{\abs{\lambda} \to \infty} \abs{P_\tau(\lambda)/P_0(\lambda)} <1$, then the characteristic equation $\Delta$ defined by \eqref{eq:quasi-PQ} has a finite number of unstable roots in the right half-plane.
 \end{enumerate}
\end{proposition}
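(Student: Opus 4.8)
The plan is to prove the two items independently by short arguments, following the approaches of \cite{Michiels2014Stability} and \cite{partington2004h} respectively.

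For item~1, the first step is to make explicit the eigenvalue reformulation of the characteristic roots. When $B_\tau = 0$, \eqref{eq:DDEM} reads $\dot\xi(t) = A_0\xi(t) + A_\tau\xi(t-\tau)$, so a nonzero exponential solution $\xi(t) = \EXP{\lambda t}v$ (with $v \in \mathbb{C}^n \setminus \{0\}$) exists precisely when $\lambda v = \bigl(A_0 + A_\tau\EXP{-\lambda\tau}\bigr) v$; equivalently, $\lambda$ is a characteristic root if and only if it is an eigenvalue of the $\lambda$-dependent matrix $M(\lambda) := A_0 + A_\tau\EXP{-\lambda\tau}$, the solution direction being a corresponding eigenvector. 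Since the spectral radius of a square matrix is bounded above by any operator norm --- in particular by the spectral norm $\norm{\cdot}_2$ --- we obtain $\abs{\lambda} \le \rho\bigl(M(\lambda)\bigr) \le \norm{M(\lambda)}_2 = \norm{A_0 + A_\tau\EXP{-\tau\lambda}}_2$, which is exactly \eqref{EN1}. I anticipate no real difficulty here: the only substantive point is the eigenvalue reformulation, after which the bound is a one-line estimate.

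For item~2, the strategy is a modulus estimate near infinity combined with the analyticity of $\Delta$ on the remaining bounded region. Fix $R > 0$ so large that $P_0$ has no zero with $\abs{\lambda} \ge R$; writing $\Delta(\lambda) = P_0(\lambda)\bigl(1 + \tfrac{P_\tau(\lambda)}{P_0(\lambda)}\EXP{-\lambda\tau}\bigr)$, a root of $\Delta$ with $\abs{\lambda} \ge R$ must therefore satisfy $\bigl|\tfrac{P_\tau(\lambda)}{P_0(\lambda)}\bigr|\,\bigl|\EXP{-\lambda\tau}\bigr| = 1$. On $\mathbb{C}_+$ one has $\bigl|\EXP{-\lambda\tau}\bigr| = \EXP{-\tau\Re(\lambda)} < 1$ because $\tau > 0$, so such a root would force $\bigl|\tfrac{P_\tau(\lambda)}{P_0(\lambda)}\bigr| > 1$. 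The hypothesis $\lim_{\abs{\lambda}\to\infty}\bigl|P_\tau(\lambda)/P_0(\lambda)\bigr| < 1$ --- whose value is $\abs{\alpha_n}$, since $P_0$ is monic of degree $n$ while for $m = n$ the polynomial $P_\tau$ has degree $n$ with leading coefficient $\alpha_n$ --- provides, after possibly enlarging $R$, a constant $c \in (0,1)$ with $\bigl|P_\tau(\lambda)/P_0(\lambda)\bigr| \le c$ for every $\abs{\lambda} \ge R$. Hence $\Delta$ has no zero in $\mathbb{C}_+ \cap \{\abs{\lambda} \ge R\}$. To conclude, $\Delta$ is entire and not identically zero (for instance $\Delta(\lambda) \to +\infty$ as $\lambda \to +\infty$ along the reals), so its zeros are isolated and cannot accumulate at a finite point; in particular only finitely many of them lie in the closed disk $\{\abs{\lambda} \le R\}$, a fortiori in $\mathbb{C}_+ \cap \{\abs{\lambda} \le R\}$. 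Combining with the previous step, $\Delta$ has only finitely many roots in $\mathbb{C}_+$.

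The only points asking for a little care are upgrading the limit hypothesis to the uniform bound $c < 1$ valid on a whole neighbourhood of infinity, and observing that the bounded region is controlled by the non-accumulation of zeros of a nonzero entire function rather than by the modulus estimate. Neither is a genuine obstacle, so the main challenge of the proposition is essentially organizational: keeping the two regimes --- large $\abs{\lambda}$ and bounded $\abs{\lambda}$ --- cleanly separated.
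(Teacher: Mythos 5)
Your proof is correct, and it follows exactly the standard arguments that the paper itself defers to: the paper gives no proof of this proposition, citing \cite{Michiels2014Stability} for item~1 and \cite{partington2004h} for item~2. Your eigenvalue-of-$A_0+A_\tau\EXP{-\lambda\tau}$ reformulation followed by the spectral-radius/operator-norm bound is the classical envelope argument for the retarded case, and your two-regime argument for item~2 (uniform bound $\abs{P_\tau/P_0}\leq c<1$ near infinity combined with $\abs{\EXP{-\lambda\tau}}<1$ on $\mathbb{C}_+$, plus non-accumulation of zeros of a nonzero entire function on the bounded part) is the standard proof of the neutral-case statement.
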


\begin{remark}
Inequality \eqref{EN1}, combined with the triangular inequality, provides a generic \emph{envelope curve} around the characteristic roots corresponding to system \eqref{eq:DDEM}. In other words, the equality case in \eqref{EN1} defines a curve in the complex plane such that all characteristic roots of $\Delta$ are located to the left of that curve. We refer to \cite{Mori1989} for further insights on spectral envelopes for retarded time-delay systems with a single delay.
\end{remark}

\subsection{Motivating Example: Controlling the Inverted Pendulum}

Consider now a dynamical system modeling a friction-free inverted pendulum on a cart. The model adopted here was discussed in \cite{Krauskopf2004, S2005, BMN2015} and, in the sequel, we keep the same notations.
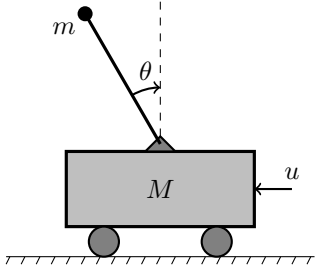
\begin{figure}[!ht]
\begin{center}
\begin{tikzpicture}
\draw (-2.05, 0) -- (2.05, 0);
\foreach \x in {-2, -1.8, ..., 2}{
\draw ({\x-0.05}, -0.1) -- (\x, 0);
}

\draw[very thick, fill=gray!50] (-1.25, 0.4) rectangle (1.25, 1.4);
\node at (0, 0.9) {$M$};

\draw[thick, fill=gray] (-0.75, 0.2) circle[radius=0.2];
\draw[thick, fill=gray] (0.75, 0.2) circle[radius=0.2];

\draw[thick, ->] (1.75, 0.9) node[above] {$u$} -- (1.25, 0.9);

\draw[thick, fill=gray] (0.2, 1.4) -- (0, 1.6) -- (-0.2, 1.4) -- cycle;

\draw[dashed] (0, 1.6) -- (0, 3.5);
\draw[->, thick] (0, 1.5) ++(120:0.75) arc[start angle=120, end angle=90, radius=0.75];
\draw (0, 1.5) ++(105:0.75) node[above] {$\theta$};

\draw[very thick] (0, 1.5) -- ++(120:2);
\fill (0, 1.5) ++(120:2) circle[radius=0.1] node[below left] {$m$};
\end{tikzpicture}
\caption{Inverted pendulum on a cart.}\label{IP}
\end{center}
\end{figure}
In the dimensionless form, the dynamics of the inverted pendulum on a cart in Fig.~\ref{IP} is governed by the second-order differential equation
\begin{equation}\label{InvertedPendulum}
\left(1 - \frac{3\epsilon}{4}\cos^{2}(\theta)\right)\ddot\theta+\frac{3\epsilon}{8}\dot\theta^2\sin(2\theta)-\sin\theta + u\cos\theta = 0,
\end{equation}
where $\epsilon={m}/{(m+M)}$, $M$ is the mass of the cart, $m$ is the mass of the pendulum, and $u$ represents the control law, which is the horizontal driving force. If $\epsilon \neq \frac{4}{3}$, then the linearized system around the equilibrium point $\theta = \dot\theta = u = 0$ is
$\ddot \theta + \frac{u - \theta}{1 - \frac{3 \epsilon}{4}} = 0$.

We assume that the system is controlled by using a standard delayed PD controller of the form $u(t)=k_p\,\theta(t-\tau)+k_d\,\dot\theta(t-\tau)$, with $(k_p,k_d)\in\mathbb{R}^2$. The local stability of the closed-loop system is then reduced to the study of the location of the spectrum of the quasipolynomial $\Delta(\lambda, k_p, k_d, \tau) = \lambda^2 - \frac{1}{1 - \frac{3 \epsilon}{4}} + \frac{\EXP{-\lambda \tau}}{1 - \frac{3 \epsilon}{4}} \left(k_d \lambda + k_p\right)$ as a function of the system's parameters $(k_p,k_d,\tau)$. A generalized Bogdanov--Takens singularity with codimension three is identified in \cite{Krauskopf2004}. It should be mentioned that $\degree(\Delta)=4$ and that the system free of delay is of second-order. In this case, the multiplicity $3$ represents an intermediate multiplicity larger than the degree of the system free of delays.

\subsection{Whittaker Functions and Hille Oscillation Theorem}
\label{sec:Whittaker-Hille}

We shall use in this paper some classical hypergoemetric functions, which we present now. The first such function we introduce is the \emph{Kummer confluent hypergeometric function}, which, for $a, b \in \mathbb C$ such that $-b \notin \mathbb N$, is the entire function
$\Phi(a, b, \cdot): \mathbb C \to \mathbb C$ 
defined by the series
\begin{equation}
\label{DefiConfluent}
\Phi(a, b, z) = \sum_{k=0}^{\infty} \frac{(a)_k}{(b)_k} \frac{z^k}{k!}.
\end{equation}
The series in \eqref{DefiConfluent} converges for every $z \in \mathbb C$ and, as presented in \cite{Buchholz1969Confluent, Erdelyi1981Higher, Olver2010NIST}, it satisfies the \emph{Kummer differential equation}
\begin{equation}
\label{KummerODE}
z \frac{\partial^2 \Phi}{\partial z^2}(a, b, z) + (b - z) \frac{\partial \Phi}{\partial z}(a, b, z) - a \Phi(a, b, z) = 0.
\end{equation}
As discussed in \cite{Buchholz1969Confluent, Erdelyi1981Higher, Olver2010NIST}, for every $a, b, z \in \mathbb C$ such that $\Re(b) > \Re(a) > 0$, Kummer functions also admit the integral representation
\begin{equation}
\label{eq:integral-Kummer}
\Phi(a, b, z) = \frac{\Gamma(b)}{\Gamma(a) \Gamma(b - a)} \int_0^1 \EXP{zt} t^{a-1} (1-t)^{b-a-1} \diff t,
\end{equation}
 where $\Gamma$ denotes the Gamma function. This integral representation has been exploited in \cite{MBN-2021-JDE} to characterize the spectrum of some DDEs.
 
Kummer functions satisfy some recurrence relations, often called  \emph{contiguous relations}, see for instance \cite{Olver2010NIST}. In particular, the following relations are of interest.

\begin{lemma}[{\cite[p.~325]{Olver2010NIST}}]
\label{lemma:contiguous}
Let $a, b, z \in \mathbb C$ with $a \neq b$, $z\neq0$, and $-b \notin \mathbb N$. The following re\-la\-tions hold:
\begin{equation}\label{Contiguous}
\begin{aligned}
&\Phi(a , b +1, z)= 
\frac{-b \left(a +z \right)\, \Phi(a , b , z)+a b\, \Phi(a +1, b , z)}{z \left(a -b \right)},\\
&\Phi(a +1, b +1, z)= 
-\frac{-b\, \Phi(a +1, b , z)+b\, \Phi(a , b , z)}{z}.
\end{aligned}
\end{equation}
\end{lemma}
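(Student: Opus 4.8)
The plan is to prove both identities in \eqref{Contiguous} directly from the power series \eqref{DefiConfluent}, by matching the coefficient of each $z^k$. The only hypotheses that are genuinely needed are $z \neq 0$ and $a \neq b$ (so that the divisions by $z$ and by $z(a-b)$ are legitimate), together with $-b \notin \mathbb N$: since the latter forces $b \notin \{0, -1, -2, \ldots\}$, it also gives $-(b+1) \notin \mathbb N$, so all four Kummer functions $\Phi(a, b, \cdot)$, $\Phi(a+1, b, \cdot)$, $\Phi(a, b+1, \cdot)$, $\Phi(a+1, b+1, \cdot)$ appearing in \eqref{Contiguous} are well-defined entire functions, and every series below converges absolutely on $\mathbb C$, so term-by-term manipulations are justified.

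First I would establish the second identity in \eqref{Contiguous}, which is equivalent to $\Phi(a+1,b,z) - \Phi(a,b,z) = \frac{z}{b}\,\Phi(a+1,b+1,z)$. Subtracting the two series and using the elementary Pochhammer identity $(a+1)_k - (a)_k = k\,(a+1)_{k-1}$ (valid for $k \geq 1$, and trivially $0 = 0$ for $k = 0$) annihilates the $k = 0$ term and leaves $\sum_{k \geq 1} \frac{(a+1)_{k-1}}{(b)_k}\frac{z^k}{(k-1)!}$; re-indexing $k \mapsto k+1$ and writing $(b)_{k+1} = b\,(b+1)_k$ turns this into $\frac{z}{b}\sum_{k \geq 0}\frac{(a+1)_k}{(b+1)_k}\frac{z^k}{k!} = \frac{z}{b}\,\Phi(a+1,b+1,z)$, as desired.

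Next I would prove the auxiliary relation $(b-a)\,\Phi(a,b+1,z) + a\,\Phi(a+1,b+1,z) = b\,\Phi(a,b,z)$ by the same method: the coefficient of $z^k$ on the left is $\frac{1}{(b+1)_k\, k!}\big[(b-a)(a)_k + a\,(a+1)_k\big]$, and using $a\,(a+1)_k = (a+k)\,(a)_k$ the bracket collapses to $(b+k)\,(a)_k$; since $b\,(b+1)_k = (b+k)\,(b)_k$, this equals $b\,\frac{(a)_k}{(b)_k\, k!}$, i.e.\ the coefficient of $z^k$ on the right. Finally, eliminating $\Phi(a+1,b+1,z)$ between this auxiliary relation and the already proved second identity — substitute $\Phi(a+1,b+1,z) = \frac{b}{z}\big(\Phi(a+1,b,z) - \Phi(a,b,z)\big)$ and solve for $\Phi(a,b+1,z)$, which is licit because $a \neq b$ and $z \neq 0$ — yields exactly the first identity in \eqref{Contiguous}.

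There is no real obstacle here: the argument is entirely elementary Pochhammer bookkeeping, and the only points requiring a little care are the treatment of the $k = 0$ term when a series is multiplied by a power of $z$ (which produces an index shift) and the re-conversion of shifted symbols such as $(a+1)_k$ or $(b+1)_k$ back to $(a)_k$, $(b)_k$. As an alternative to the last step, one could instead derive the first identity from the Kummer equation \eqref{KummerODE} together with the derivative formula $\frac{\partial}{\partial z}\Phi(a,b,z) = \frac{a}{b}\,\Phi(a+1,b+1,z)$, itself immediate from \eqref{DefiConfluent}; but the purely algebraic route above is shorter and avoids differentiating.
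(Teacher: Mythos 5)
Your proposal is correct. Note that the paper does not prove Lemma~\ref{lemma:contiguous} at all: it is imported verbatim from the NIST handbook \cite[p.~325]{Olver2010NIST}, so there is no ``paper proof'' to compare against. What you supply is a complete, self-contained elementary verification, and every step checks out: the identity $(a+1)_k-(a)_k=k\,(a+1)_{k-1}$ and the reductions $a\,(a+1)_k=(a+k)(a)_k$, $b\,(b+1)_k=(b+k)(b)_k$ are exactly the right Pochhammer manipulations; the re-indexing correctly produces the factor $z/b$ (with $b\neq 0$ guaranteed by $-b\notin\mathbb N$); your auxiliary relation $(b-a)\Phi(a,b+1,z)+a\,\Phi(a+1,b+1,z)=b\,\Phi(a,b,z)$ holds by the coefficient computation you give; and eliminating $\Phi(a+1,b+1,z)$ between it and the second identity does yield the first identity, the divisions by $z$ and $a-b$ being covered by the stated hypotheses. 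The only thing your write-up buys beyond the paper is self-containedness; conversely, the citation route is what one would normally do for such standard contiguous relations. No gaps.
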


Kummer confluent hypergeometric functions have close links with \emph{Whittaker functions}. For $k, l \in \mathbb C$ with $-2l \notin \mathbb N^\ast$, the \emph{Whittaker function} $\mathcal M_{k, l}$ is the function defined for $z \in \mathbb C$ by
\begin{equation}
\label{KummerWhittaker}
\mathcal{M}_{k,l}(z) = \EXP{-\frac{z}{2}} z^{\tfrac{1}{2} + l} \Phi(\tfrac{1}{2} + l - k, 1 + 2 l, z),
\end{equation}
(see, e.g., \cite{Olver2010NIST}). Note that, if $\frac{1}{2} + l$ is not an integer, the function $\mathcal M_{k, l}$ is a multi-valued complex function with branch point at $z = 0$. The nontrivial roots of $\mathcal M_{k, l}$ coincide with those of $\Phi(\tfrac{1}{2} + l - k, 1 + 2 l, \cdot)$ and $\mathcal M_{k, l}$ satisfies the \emph{Whittaker differential equation}
\begin{equation}
\label{Whittaker}
\varphi''(z) =\left(\frac{1}{4}-\frac{k}{z}+\frac{l^2-\frac{1}{4}}{z^2}\right)\varphi(z).
\end{equation}
Since $\mathcal M_{k, l}$ is a nontrivial solution of the second-order linear differential equation \eqref{Whittaker}, any nontrivial root of $\mathcal M_{k, l}$ is necessarily simple.

In \cite{hille1922}, Hille studies the distribution of zeros of functions of a complex variable satisfying linear second-order homogeneous differential equations with variable coefficients, as is the case for the degenerate Whittaker function $\mathcal{M}_{k,l}$, which satisfies \eqref{Whittaker}. Thanks to an integral transformation defined there and called \emph{Green--Hille transformation}, and some further conditions on the behavior of the function, Hille showed how to discard regions in the complex plane from including complex roots.

Consider, for instance, the general homogeneous second-order differential equation
\begin{equation}
\label{eq:Hille}
\frac{d}{dz}\left[K(z)\,\frac{d\,\varphi}{dz}(z)\right]+G(z)\varphi(z)=0,
\end{equation}
where $z$ is complex and the functions $G$ and $K$ are assumed analytic in some region $\Theta$ such that $K$ does not vanish in that region. Equation~\eqref{eq:Hille} can be written in $\Theta$ as a second-order system on the unknown functions $\varphi_1(z)=\varphi(z)$ and $\varphi_2(z)=K(z)\,\frac{d\,\varphi}{dz}(z)$, and the Green--Hille transformation consists on multiplying the equation on $\varphi_1$ by $\overline{\varphi_2(z)}$, that on $\varphi_2$ by $\overline{\varphi_1(z)}$, and integrating on $z$ along a path in $\Theta$, which yields
\begin{multline}\label{Green}
\left[\overline{\varphi_1(z)}\,\varphi_2(z)\right]_{z_1}^{z_2}-\int_{z_1}^{z_2}\abs{\varphi_2(z)}^2\frac{\overline{\diff z}}{\mbox{ }\overline{K(z)}\mbox{ }} \\ {} + \int_{z_1}^{z_2}\abs{\varphi_1(z)}^2 G(z) \diff z = 0,
\end{multline}
where $z_1, z_2 \in \Theta$ and both integrals are taken along the same arbitrary smooth path in $\Theta$ connecting $z_1$ to $z_2$.

The following result, which is proved in \cite{BMN-2022-BSM} using the Green--Hille transformation from \cite{hille1922}, gives insights on the distribution of the nonasymptotic zeros of Kummer hypergeometric functions with real arguments $a$ and $b$. 

\begin{proposition}[\cite{BMN-2022-BSM}]
\label{CorZerosKummer}
Let $a,\,b\in \mathbb R$ be such that $b\geq 2$.
\begin{enumerate}
\item\label{CorZerosKummer-k-eq-0} If $b = 2a$, then all nontrivial roots $z$ of $\Phi(a,b, \cdot)$ are purely imaginary. 
\item\label{CorZerosKummer-k-geq-0} If $b > 2a$ (resp., $b < 2a$), then all nontrivial roots $z$ of $\Phi(a,b, \cdot)$ satisfy $\Re(z) > 0$ (resp., $\Re(z) < 0$).
\item\label{CorZerosKummer-k-neq-0} If $b \neq 2a$, then all nontrivial roots $z$ of $\Phi(a,b, \cdot)$ satisfy
\[(b - 2a)^2 {\Im(z)}^2 - \left(4a(b-a) - 2b\right) {\Re(z)}^{2} > 0.\]
\end{enumerate}
\end{proposition}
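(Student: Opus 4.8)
The plan is to reduce the statement to a question about the zeros of a Whittaker function and then to exploit the Green--Hille identity \eqref{Green} applied to the Whittaker equation \eqref{Whittaker} along the straight segment joining the origin to a root. Given a nontrivial root $z_0$ of $\Phi(a,b,\cdot)$, it is also a (necessarily simple) nontrivial root of $\mathcal M_{k,l}$ with $k=\tfrac b2-a$ and $l=\tfrac{b-1}{2}$, since $a=\tfrac12+l-k$ and $b=1+2l$. With this dictionary one has $b-2a=2k$, $\mu:=l^2-\tfrac14=\tfrac{b(b-2)}{4}\ge 0$ (this is where $b\ge 2$ first enters), $b=2a\Leftrightarrow k=0$, $b>2a$ (resp.\ $b<2a$) $\Leftrightarrow k>0$ (resp.\ $k<0$), and $(b-2a)^2\Im(z)^2-\bigl(4a(b-a)-2b\bigr)\Re(z)^2=4\bigl(k^2\abs{z}^2-\mu\,\Re(z)^2\bigr)$; so the three items amount, respectively, to $\Re(z_0)=0$, $\Re(z_0)$ having the sign of $k$, and $k^2\abs{z_0}^2-\mu\,\Re(z_0)^2>0$.

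Writing \eqref{Whittaker} in the form \eqref{eq:Hille} with $K\equiv 1$ and $G(z)=-\tfrac14+\tfrac kz-\tfrac\mu{z^2}$, I would apply \eqref{Green} with $\varphi=\mathcal M_{k,l}$ along the segment $z=tz_0$, $t\in(0,1]$, inside a slit plane on which $\mathcal M_{k,l}$ is single-valued. The boundary term $\bigl[\overline{\varphi_1}\,\varphi_2\bigr]$ vanishes at $z_0$ because $\varphi(z_0)=0$, and it vanishes at the origin in the limit because near $0$ one has $\mathcal M_{k,l}(z)\sim z^{b/2}$ and $\mathcal M_{k,l}'(z)\sim\tfrac b2 z^{b/2-1}$, so $\overline{\varphi_1}\,\varphi_2=O(\abs{z}^{b-1})\to 0$ since $b\ge 2$. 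Substituting $z=tz_0$ (so $\overline{\diff z}=\overline{z_0}\,\diff t$) turns \eqref{Green} into
\[
-\overline{z_0}\,\mathcal A-\tfrac14 z_0\,\mathcal B+k\,\mathcal C-\frac{\mu}{z_0}\,\mathcal D=0 ,
\]
where $\mathcal A=\int_0^1\abs{\varphi'(tz_0)}^{2}\diff t$, $\mathcal B=\int_0^1\abs{\varphi(tz_0)}^{2}\diff t$, $\mathcal C=\int_0^1 t^{-1}\abs{\varphi(tz_0)}^{2}\diff t$, $\mathcal D=\int_0^1 t^{-2}\abs{\varphi(tz_0)}^{2}\diff t$ are all strictly positive, and finite because the integrands behave like $t^{b-2}$, $t^{b}$, $t^{b-1}$, $t^{b-2}$ near $0$ — which is again where $b\ge 2$ is used.

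Now I would write $z_0=x+\I y$, $\rho=x^2+y^2$, and split the displayed identity into real and imaginary parts:
\[
x\Bigl(\mathcal A+\tfrac14\mathcal B+\tfrac\mu\rho\mathcal D\Bigr)=k\,\mathcal C ,
\qquad
y\Bigl(\mathcal A-\tfrac14\mathcal B+\tfrac\mu\rho\mathcal D\Bigr)=0 .
\]
Item~\ref{CorZerosKummer-k-eq-0} is then immediate: if $k=0$ the first equation forces $x=0$, the parenthesis being positive. Item~\ref{CorZerosKummer-k-geq-0} follows because $\mathcal C$ and the parenthesis are positive, so $x$ has the sign of $k$. For item~\ref{CorZerosKummer-k-neq-0}, put $P=\mathcal A+\tfrac\mu\rho\mathcal D>0$ and $Q=\tfrac14\mathcal B>0$; squaring the first equation and using $(P+Q)^2\ge 4PQ$ gives $k^2\mathcal C^2=x^2(P+Q)^2\ge 4x^2PQ$, while Cauchy--Schwarz applied to $\mathcal C=\int_0^1\abs{\varphi(tz_0)}\cdot t^{-1}\abs{\varphi(tz_0)}\,\diff t$ gives $\mathcal C^2\le\mathcal B\,\mathcal D=4Q\,\mathcal D$. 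Combining, $4x^2PQ\le 4k^2Q\,\mathcal D$, hence $x^2P\le k^2\mathcal D$; and since $P>\tfrac\mu\rho\mathcal D$ (because $\mathcal A>0$) and $\mathcal D>0$, we obtain $\mu x^2<k^2\rho$, i.e.\ $k^2\abs{z_0}^2-\mu\,\Re(z_0)^2>0$, which is item~\ref{CorZerosKummer-k-neq-0}.

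I expect the main obstacle to be the careful justification that \emph{both} endpoint contributions in \eqref{Green} vanish: the Whittaker equation is singular at $z=0$ and $\mathcal M_{k,l}$ has a branch point there, so one must control precisely the orders of vanishing (or blow-up) of $\mathcal M_{k,l}$, of $\mathcal M_{k,l}'$, and of the weights $t^{-1},t^{-2}$ along the path, together with the single-valuedness on the chosen slit plane — and this is exactly what the hypothesis $b\ge 2$ secures (it also guarantees $\mu\ge 0$, which the positivity arguments need). Once the integral identity and the positivity/finiteness of $\mathcal A,\mathcal B,\mathcal C,\mathcal D$ are in hand, the remainder is the elementary algebra above, the only subtle point being the order of the estimates in item~\ref{CorZerosKummer-k-neq-0}: apply AM--GM to $P,Q$ first, then Cauchy--Schwarz in the form $\mathcal C^2\le\mathcal B\mathcal D$, then discard the positive term $\mathcal A$.
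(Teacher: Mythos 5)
Your proposal is correct and follows essentially the same route the paper attributes to \cite{BMN-2022-BSM} and reuses in the proof of Theorem~\ref{thm:MID}: pass from $\Phi(a,b,\cdot)$ to the Whittaker function $\mathcal M_{k,l}$ with $k=\tfrac b2-a$, $l=\tfrac{b-1}{2}$, apply the Green--Hille identity \eqref{Green} to \eqref{Whittaker} along the segment from $0$ to a root (the hypothesis $b\geq 2$ killing the boundary term at the origin and ensuring integrability), and read off the three items from the real and imaginary parts of the resulting identity. Your algebraic dictionary, the sign analysis for items~\ref{CorZerosKummer-k-eq-0}--\ref{CorZerosKummer-k-geq-0}, and the AM--GM plus Cauchy--Schwarz argument for item~\ref{CorZerosKummer-k-neq-0} all check out.
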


\begin{remark}
\label{remk:combination}
In feedback control theory, one of the major interests of Proposition~\ref{CorZerosKummer} is the fact that a quasipolynomial admitting a characteristic root of maximal multiplicity, equal to its degree, can be factorized in terms of a Kummer function. As discussed in the sequel, a quasipolynomial with a root with intermediate multiplicity also shares its remaining roots with an appropriate linear combination of Kummer functions. Unfortunately, to the  best of the authors' knowledge, there does not exist any result in the open literature describing the distribution of the nonasymptotic zeros of such function combinations.
\end{remark}

\subsection{Problem Formulation}

Consider now the DDE \eqref{eq:DDE} and its characteristic function $\Delta$ given by \eqref{eq:Delta-P0-Ptau}--\eqref{eq:quasi-PQ}. As indicated in \cite{BN-ACAP-2016}, $\degree(\Delta)=n+m+1$. 

We say that a characteristic root $\lambda_0$ of $\Delta$ satisfies the \emph{MID property\/} if (i) its \emph{algebraic multiplicity\/} (denoted by $M(\lambda_0)$) is \emph{larger than one}, and (ii) it is \emph{dominant\/} in the sense that all the characteristic roots $\lambda_\sigma$ of the spectrum satisfy the condition $\Re(\lambda_\sigma)\leq \Re(\lambda_0)$. In other words, $\lambda_0$ is the rightmost root of the spectrum and defines the \emph{spectral abscissa\/} of the quasipolynomial $\Delta$.
In the case $M(\lambda_0)=\degree(\Delta)$, it was shown in \cite{MBN-2021-JDE} (case $m = n - 1$)  and \cite{BMN-2022-CRAS} (general case   $m \leq n$) that $\lambda_0$ satisfies the MID property. This ``limit'' case is also called \emph{generic MID}. 

The problem addressed in this paper can be formulated as follows: \emph{finding conditions on the parameters of the dynamical system \eqref{eq:DDE} such that a characteristic root $\lambda_0$ with \emph{intermediate\/} algebraic multiplicity $M(\lambda_0)$ verifying $n+1\leq M(\lambda_0) \leq n+m$ satisfies the MID property.\/} More precisely, and for the sake of brevity, our focus will be to derive \emph{appropriate conditions guaranteeing that $M(\lambda_0)=n+m$}. Such an intermediate MID leaves one degree of freedom in terms of system's parameters\footnote{For instance, the delay or a feedback gain may appear as being appropriate.}. In control, such a parameter may be used to improve the performances of the corresponding closed-loop system. The general case $n+1\leq M(\lambda_0) \leq n+m$ can still be addressed by similar arguments, but Lemma~\ref{lemma:edo-F} below will involve linear combination of more Kummer functions, the expression \eqref{eq:Delta-max-mult} for $\Delta(\lambda)$ in Theorem~\ref{thm:max-mult} will involve an integral containing a more general polynomial in $t$ than $1 - A t$, and the parameter vector $\Vec{p}$ in Theorem~\ref{thm:MID} will depend on the coefficients of such a polynomial.

The PD control of the inverted pendulum in the case of delay in the input/output channel considered as a motivating example corresponds to such a situation. More precisely, in that case, we have $\degree(\Delta) = 4$, $n = 2$, and hence the only possible intermediate multiplicity is $M(\lambda_0)=3$.

\section{Main Results}
\label{sec:MID-int-mult}

\subsection{Some Insights on Linear Combinations of Two Kummer Functions} 
\label{sec:generalized-K-W}

As discussed in Remark~\ref{remk:combination}, beyond the standard contiguous relation, to the best of the authors' knowledge, there does not exist any result describing the distribution of the nonasymptotic zeros of linear combinations of Kummer functions. 

The next lemma provides a partial step towards that goal, by providing a non-autonomous second-order differential equation having a given linear combination of Kummer functions as a solution.

\begin{lemma}
\label{lemma:edo-F}
Let $a,\,b$ be two complex numbers and $\alpha$ and $\beta$ two real numbers and define the parameter vector  $\Vec p=(a,\,b,\,\alpha,\,\beta)$. Then the complex function $F$ defined by
\begin{equation}
\label{eq:generalized-Kummer}
F(z,\Vec p) = \alpha\,\Phi(a, b, z) + \beta\, \Phi(a, b+1, z),
\end{equation}
with $z\notin\{0,\,\frac{\beta  \left(\beta +\alpha \right) b^{2}}{\left(\left(a -b \right) \alpha -\beta  b \right) \alpha}\}$, satisfies the second-order differential equation
\begin{equation}
\label{eq:edo-F}
\frac{\partial^2 F}{\partial z^2}(z,\Vec p)  + Q(z,\Vec p) \frac{\partial F}{\partial z}(z,\Vec p) + R(z,\Vec p) F(z,\Vec p) = 0,
\end{equation}
where
\begin{align}
Q(z,\Vec p) & = -1+\frac{b +1}{z}-\frac{\alpha  \left(a \alpha -\alpha  b -\beta  b \right)}{D(z,\Vec p)}, \label{eq:def-Q} \displaybreak[0] \\
R(z,\Vec p) & = -\frac{N(z,\Vec p)}{D(z,\Vec p)}, \label{eq:def-R} \displaybreak[0] \\
\intertext{with}
N(z,\Vec p) & = a \left(\left(\left(a -b \right) \alpha^{2}-\alpha  b \beta \right) z -\beta  b \left(b +1\right) \alpha \right) \notag \\
& \hphantom{=} -a\,b^{2} \beta^{2}, \notag \displaybreak[0] \\
D(z,\Vec p) & = \left(\left(a -b \right) \alpha^{2}-\alpha  b \beta \right) z -\alpha  \,b^{2} \beta -b^{2} \beta^{2}. \notag
\end{align}
\end{lemma}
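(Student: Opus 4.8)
The plan is to eliminate the two Kummer functions $\Phi(a,b,z)$ and $\Phi(a,b+1,z)$ algebraically in favor of $F$ and its derivatives. First I would use the contiguous relations of Lemma~\ref{lemma:contiguous} together with the Kummer equation \eqref{KummerODE} to build a small linear system. Concretely, introduce the shorthand $u(z)=\Phi(a,b,z)$ and $v(z)=\Phi(a,b+1,z)$, so that $F=\alpha u+\beta v$. Differentiating \eqref{eq:generalized-Kummer} gives $F'=\alpha u'+\beta v'$; the derivatives $u'$, $v'$ can themselves be rewritten, via the differentiation formula $\frac{d}{dz}\Phi(a,b,z)=\frac{a}{b}\Phi(a+1,b+1,z)$ and the second contiguous relation in \eqref{Contiguous}, as linear combinations of $u$ and $v$ with coefficients rational in $z$. (Equivalently: $u' = u - \frac{b-a}{b}\,\Phi(a,b+1,z)\cdot(\text{something})$ — one of the standard Gauss-type contiguous identities expressing $\Phi'(a,b,z)$ through $\Phi(a,b,z)$ and $\Phi(a,b+1,z)$.) Doing this, both $F'$ and $F''$ become explicit linear combinations $c_1(z)u + c_2(z)v$ and $d_1(z)u + d_2(z)v$ with rational coefficients.

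The second step is pure linear algebra: we now have three vectors $F$, $F'$, $F''$ in the two-dimensional span of $u$ and $v$ (over the field of rational functions in $z$), so they are linearly dependent, and the coefficient of that dependence relation is exactly \eqref{eq:edo-F}. Writing $F = \alpha u + \beta v$ and solving the $2\times2$ system for $u$ and $v$ in terms of $F$ and $F'$ — which is possible precisely when the relevant Wronskian-type determinant is nonzero, and that determinant is (up to a rational factor) what produces the denominator $D(z,\Vec p)$ and the excluded point $z=\frac{\beta(\beta+\alpha)b^2}{((a-b)\alpha-\beta b)\alpha}$ — I would substitute back into the expression for $F''$ to obtain $F'' = Q(z,\Vec p)F' + (-R(z,\Vec p))F$ after reorganizing signs to match \eqref{eq:edo-F}. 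Then it only remains to check that the rational functions $Q$ and $R$ so obtained coincide with \eqref{eq:def-Q}--\eqref{eq:def-R}; this is a direct but lengthy simplification, and I would simply state that it is routine (and confirm the polynomials $N$ and $D$ and the $\frac{b+1}{z}$, $-1$ terms match the $z\to0$ and $z\to\infty$ asymptotics as a sanity check).

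An alternative, slightly cleaner route would be to pass through the Kummer equations directly: $u$ satisfies \eqref{KummerODE} with parameters $(a,b)$ and $v$ satisfies it with $(a,b+1)$. Multiplying, combining with one contiguous relation to kill one of the two functions, and then expressing the surviving one through $F$ gives the same second-order ODE. Either way the structure is: (1) reduce everything to the $2$-dimensional module spanned by two contiguous Kummer functions; (2) note $\{F,F',F''\}$ must be dependent there; (3) read off the coefficients and simplify.

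The main obstacle is not conceptual but the bookkeeping in step~(3): tracking which contiguous relation to apply so that the coefficients stay rational of the lowest possible degree, and then verifying the explicit forms of $N$, $D$, and the $-\alpha(a\alpha-\alpha b-\beta b)/D$ correction term in $Q$. In particular one must be careful that the hypotheses $a\neq b$, $z\neq0$, $-b\notin\mathbb N$ of Lemma~\ref{lemma:contiguous} are exactly what is needed to invoke \eqref{Contiguous}, and that the additional excluded value of $z$ is precisely where the change of basis from $\{u,v\}$ to $\{F,F'\}$ degenerates — i.e. where $D(z,\Vec p)$ combined with the coefficient matrix becomes singular — so that \eqref{eq:edo-F} is valid on the stated punctured domain and nowhere needs division by zero.
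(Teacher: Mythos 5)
Your plan follows essentially the same route as the paper, which itself only sketches the argument: use $\frac{\partial\Phi}{\partial z}(a,b,z)=\frac{a}{b}\Phi(a+1,b+1,z)$ together with the contiguous relations of Lemma~\ref{lemma:contiguous} to write $F'$ and $F''$ as rational-linear combinations of $\Phi(a,b,z)$ and $\Phi(a,b+1,z)$, then eliminate these two functions by linear algebra, the degeneracy locus of the $2\times 2$ change of basis accounting for the excluded value of $z$ and the denominator $D$. Both you and the authors defer the final explicit simplification of $Q$, $N$, and $D$ as routine, so the proposal is consistent with the paper's proof.
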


Lemma~\ref{lemma:edo-F} can be proved by using that $\frac{\partial \Phi}{\partial z} (a, b, z) = \frac{a}{b} \Phi(a+1, b+1, z)$, which follows immediately from \eqref{DefiConfluent}, and exploiting the contiguous relations from Lemma~\ref{lemma:contiguous}. In the sequel, we shall refer to functions $F$ of the form \eqref{eq:generalized-Kummer} as \emph{Kummer-type functions}.

Note that Whittaker functions are defined in terms of Kummer functions in \eqref{KummerWhittaker} by using the multiplicative factor $\EXP{-\frac{z}{2}} z^{\frac{1}{2} + l}$, thanks to which the Whittaker differential equation \eqref{Whittaker} has no first-order term. We now proceed similarly from Kummer-type functions in order to define \emph{Whittaker-type functions}. The next lemma can be shown by straightforward computations.

\begin{lemma}
Let $a, b$ be two complex numbers, $\alpha, \beta$ be two real numbers, $F$ be the function defined in \eqref{eq:generalized-Kummer}, and $Q$ and $R$ be given by \eqref{eq:def-Q} and \eqref{eq:def-R}, respectively. 

Let $\mathcal Q$ be a primitive of $\frac{Q}{2}$ and define the function $W$ by
\begin{equation}
\label{eq:generalized-Whittaker}
W(z,\Vec p) = \EXP{\mathcal Q(z, \Vec p)} F(z, \Vec p).
\end{equation}
Then $W$ satisfies the second-order differential equation
\begin{equation}
\label{eq:Whittaker-type-edo}
\frac{\partial^2 W}{\partial z^2}(z,\Vec p) + G(z,\Vec p) W(z,\Vec p) = 0,
\end{equation}
where
\begin{equation}
\label{eq:def-G}
G(z,\Vec p) = R(z,\Vec p) - \frac{(Q(z,\Vec p))^2}{4} - \frac{1}{2}\frac{\partial Q}{\partial z}(z,\Vec p).
\end{equation}
\end{lemma}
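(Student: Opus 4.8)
The plan is to verify directly that the function $W$ defined by \eqref{eq:generalized-Whittaker} solves \eqref{eq:Whittaker-type-edo}, treating this as the standard reduction that removes the first-order term from a linear second-order ODE. First I would recall that $F$ satisfies \eqref{eq:edo-F} by Lemma~\ref{lemma:edo-F}, and set $W(z, \Vec p) = \EXP{\mathcal Q(z, \Vec p)} F(z, \Vec p)$ where $\mathcal Q' = Q/2$ (derivatives with respect to $z$, with $\Vec p$ fixed). Then I would compute $\frac{\partial W}{\partial z} = \EXP{\mathcal Q}\left(\mathcal Q' F + \frac{\partial F}{\partial z}\right) = \EXP{\mathcal Q}\left(\frac{Q}{2} F + \frac{\partial F}{\partial z}\right)$, and differentiate once more to get
\[
\frac{\partial^2 W}{\partial z^2} = \EXP{\mathcal Q}\left(\frac{\partial^2 F}{\partial z^2} + Q\,\frac{\partial F}{\partial z} + \left(\frac{Q^2}{4} + \frac{1}{2}\frac{\partial Q}{\partial z}\right) F\right).
\]

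Next I would substitute $\frac{\partial^2 F}{\partial z^2} = -Q\,\frac{\partial F}{\partial z} - R F$ from \eqref{eq:edo-F} into this expression; the terms $Q\,\frac{\partial F}{\partial z}$ cancel, leaving $\frac{\partial^2 W}{\partial z^2} = \EXP{\mathcal Q}\left(-R + \frac{Q^2}{4} + \frac{1}{2}\frac{\partial Q}{\partial z}\right) F$. Recognizing that $\EXP{\mathcal Q} F = W$ and that $-R + \frac{Q^2}{4} + \frac{1}{2}\frac{\partial Q}{\partial z} = -G$ by the definition \eqref{eq:def-G}, I obtain $\frac{\partial^2 W}{\partial z^2} = -G\, W$, which is exactly \eqref{eq:Whittaker-type-edo}. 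This is the ``straightforward computation'' the statement alludes to.

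The only genuine subtlety — and the step I expect to require a word of care rather than real difficulty — is domain-related: the primitive $\mathcal Q$ of $Q/2$ must exist and be analytic on the relevant region, which requires staying away from the singularities $z = 0$ and $z = \frac{\beta(\beta+\alpha)b^2}{((a-b)\alpha - \beta b)\alpha}$ of $Q$ (the zeros of $z$ and of $D(z, \Vec p)$), so that $\EXP{\mathcal Q}$ is well-defined and nonvanishing and the chain-rule manipulations above are legitimate. Since $Q$ is a rational function of $z$ with simple poles there, a local primitive exists on any simply connected domain avoiding those points (it will generally be a combination of a linear term, a logarithm, and a $\log D$ term), and on such a domain all the computations are valid. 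I would therefore state the result on such a punctured/slit region, note that multi-valuedness of $\mathcal Q$ is harmless because it only multiplies $W$ by a nonzero constant on each branch and hence does not affect the location of zeros of $W$, and leave the explicit form of $G$ as given by \eqref{eq:def-G} without expanding it.
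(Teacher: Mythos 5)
Your computation is correct and is precisely the ``straightforward computation'' the paper invokes without writing out: the standard substitution $W = \EXP{\mathcal Q} F$ with $\mathcal Q' = Q/2$ eliminates the first-order term of \eqref{eq:edo-F} and yields \eqref{eq:Whittaker-type-edo} with $G$ as in \eqref{eq:def-G}. Your added remark on choosing a simply connected domain avoiding the poles of $Q$ so that the primitive exists is a sensible precaution that the paper leaves implicit.
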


In the sequel, we refer to functions $W$ of the form \eqref{eq:generalized-Whittaker} as \emph{Whittaker-type functions}.

\subsection{Necessary and Sufficient Conditions for the Intermediate Multiplicity $M(\lambda_0) = n + m$}

Thanks to the preliminary results of Section~\ref{sec:generalized-K-W}, we are now in position to prove the following result, providing a necessary and sufficient condition for a given real number $\lambda_0$ to be a root of a quasipolynomial $\Delta$ with multiplicity $n + m$.

\begin{theorem}
\label{thm:max-mult}
Let $\tau > 0$, $\lambda_0 \in \mathbb R$, and consider the quasipolynomial $\Delta$ from \eqref{eq:Delta-P0-Ptau}--\eqref{eq:quasi-PQ}. The number $\lambda_0$ is a root of multiplicity at least $n + m$ of $\Delta$ if and only if there exists $A \in \mathbb R$ such that
\begin{multline}
\label{eq:Delta-max-mult}
\Delta(\lambda) = \frac{\tau^{m} (\lambda - \lambda_0)^{n + m}}{(m-1)!} \\ \cdot \int_0^1 t^{m-1} (1 - t)^{n - 1} (1 - A t) \EXP{-t \tau(\lambda - \lambda_0)} \diff t.
\end{multline}
\end{theorem}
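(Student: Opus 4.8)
The plan is to prove both implications by working directly with the quasipolynomial $\Delta_A(\lambda)$ defined by the right-hand side of \eqref{eq:Delta-max-mult}, establishing in turn: that $\Delta_A$ is genuinely of the form \eqref{eq:Delta-P0-Ptau}--\eqref{eq:quasi-PQ}, that it admits $\lambda_0$ as a root of multiplicity at least $n+m$, and that, conversely, every $\Delta$ of the form \eqref{eq:quasi-PQ} with such a root must coincide with some $\Delta_A$. First I would reduce to $\lambda_0 = 0$ via the change of variable $\lambda \mapsto \lambda + \lambda_0$, which only replaces $P_\tau$ by $\EXP{-\lambda_0\tau}\,P_\tau(\cdot + \lambda_0)$, leaves $\tau$ and the monic-degree-$n$ structure of $P_0$ untouched, and turns \eqref{eq:Delta-max-mult} into the same formula with $\lambda_0 = 0$.

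For the implication ``$\Leftarrow$'', the key observation is that $p_A(t) := t^{m-1}(1-t)^{n-1}(1 - At)$ is a polynomial of degree $n+m-1$ vanishing to order $m-1$ at $t = 0$ and to order $n-1$ at $t = 1$. Integrating $\int_0^1 p_A(t)\,\EXP{-t\tau\lambda}\diff t$ by parts $n+m$ times and multiplying by the prefactor $\tau^m\lambda^{n+m}/(m-1)!$, I expect the boundary terms at $t = 0$ --- which, by the vanishing order, begin only at the $(m-1)$-th derivative of $p_A$ --- to reassemble into a polynomial in $\lambda$ of degree exactly $n$, with leading coefficient $p_A^{(m-1)}(0)/(m-1)! = 1$; and the boundary terms at $t = 1$ --- beginning only at the $(n-1)$-th derivative --- to reassemble into $\EXP{-\lambda\tau}$ times a polynomial in $\lambda$ of degree at most $m$. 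This would show that $\Delta_A = P_0 + P_\tau\,\EXP{-\lambda\tau}$ with $P_0$ monic of degree $n$ and $\degree(P_\tau)\le m$, i.e.\ that $\Delta_A$ has the required form; and since the integral in \eqref{eq:Delta-max-mult} is entire in $\lambda$, the factor $\lambda^{n+m}$ then gives at once that $\lambda_0 = 0$ is a root of $\Delta_A$ of multiplicity at least $n+m$.

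For the implication ``$\Rightarrow$'', I would extract from the same computation the precise $A$-dependence of the coefficient of $\lambda^m$ in $P_\tau$: only the lowest-order boundary term at $t = 1$ contributes to it, and $p_A^{(n-1)}(1) = (-1)^{n-1}(n-1)!\,(1-A)$, so this coefficient equals $c\,(1-A)$ with $c$ an explicit nonzero constant depending only on $n$, $m$, $\tau$. Then, given any $\Delta = P_0 + P_\tau\,\EXP{-\lambda\tau}$ of the form \eqref{eq:quasi-PQ} having $\lambda_0 = 0$ as a root of multiplicity at least $n+m$, I would set $A_0 := 1 - c^{-1}\,(\text{coefficient of }\lambda^m\text{ in }P_\tau) \in \mathbb R$. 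In $\Delta - \Delta_{A_0}$ the monic degree-$n$ terms cancel and, by the choice of $A_0$, so does the degree-$m$ term of the delayed polynomial; hence $\Delta - \Delta_{A_0}$ is a quasipolynomial whose non-delayed part has degree at most $n-1$ and whose delayed part has degree at most $m-1$. By the P\'olya--Szeg\H{o} bound recalled in Section~\ref{sec:prob-MID}, a nonzero such quasipolynomial has no root of multiplicity exceeding $n+m-1$; since $\Delta$ and $\Delta_{A_0}$ both vanish to order at least $n+m$ at $0$, so does their difference, which must therefore vanish identically. Thus $\Delta = \Delta_{A_0}$ is of the form \eqref{eq:Delta-max-mult}, which yields the stated equivalence.

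The step I expect to be the main obstacle is the bookkeeping in ``$\Leftarrow$'': the repeated integration by parts must be carried out carefully enough to confirm that the boundary contributions really do collapse into a polynomial plus $\EXP{-\lambda\tau}$ times a polynomial with exactly the stated degree and monicity, and in particular to pin down both the unit leading coefficient and the nonvanishing constant $c$. An alternative for this step is to pass through the integral representation \eqref{eq:integral-Kummer}, which rewrites the right-hand side of \eqref{eq:Delta-max-mult} as $\lambda^{n+m}$ times a fixed linear combination of $\Phi(m,n+m,-\tau\lambda)$ and $\Phi(m+1,n+m+1,-\tau\lambda)$ --- the Kummer-type combination exploited in the results that follow --- but the verification of its degree and multiplicity properties still reduces to the same elementary computation.
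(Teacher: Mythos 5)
Your proof is correct, and while the ``$\Leftarrow$'' half (integration by parts of $\int_0^1 p_A(t)\EXP{-zt}\diff t$, reading off that the boundary terms at $t=0$ give a monic degree-$n$ polynomial and those at $t=1$ give $\EXP{-z}$ times a polynomial of degree at most $m$) is exactly the computation the paper performs via its identity \eqref{eq:integral-p-exp}, your ``$\Rightarrow$'' half takes a genuinely different route. The paper argues by dimension counting: the set $\mathcal W_{\lambda_0}^{n,m}$ of functions of the form \eqref{eq:Delta-max-mult} and the set $\mathcal V_{\lambda_0}^{n+m}$ of quasipolynomials in $\mathcal V$ vanishing to order $n+m$ at $\lambda_0$ are both one-dimensional affine subspaces with $\mathcal W_{\lambda_0}^{n,m}\subset\mathcal V_{\lambda_0}^{n+m}$, hence equal --- an argument that rests on the asserted linear independence of the $n+m$ functionals $\Delta\mapsto\Delta^{(k)}(\lambda_0)$ on $\mathcal V$. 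You instead exploit the explicit $A$-dependence: since the coefficient of $\lambda^m$ in the delayed part of $\Delta_A$ is $c(1-A)$ with $c=(-1)^n\EXP{\tau\lambda_0}\tau^{m-n}(n-1)!\neq 0$ (your computation of $p_A^{(n-1)}(1)$ checks out), you can match that coefficient, after which $\Delta-\Delta_{A_0}$ has non-delayed degree at most $n-1$ and delayed degree at most $m-1$, hence P\'olya--Szeg\H{o} degree at most $n+m-1$, and so cannot vanish to order $n+m$ unless it is identically zero. This buys you something: it is constructive (it identifies $A_0$ explicitly from the data of $\Delta$) and it replaces the paper's unjustified independence assertion by a single, clean application of the multiplicity bound already quoted in Section~\ref{sec:prob-MID}; the paper's version, in exchange, avoids singling out any particular coefficient. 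One cosmetic slip in your closing aside: splitting $1-At$ directly gives the pair $\Phi(m,n+m,\cdot)$ and $\Phi(m+1,n+m+1,\cdot)$, whereas the combination actually used in Theorem~\ref{thm:MID} comes from writing $1-At=(1-A)+A(1-t)$ and is $\Phi(m,n+m,\cdot)$ together with $\Phi(m,n+m+1,\cdot)$, i.e.\ the form \eqref{eq:generalized-Kummer} with the same first argument; this does not affect your argument.
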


\begin{proof}
Let $\mathcal V$ be the set of all functions $\Delta$ of the form $\Delta(\lambda) = P_0(\lambda) + \EXP{-\lambda \tau} P_\tau(\lambda)$ with $P_0$ and $P_\tau$ given by \eqref{eq:quasi-PQ}. Note that $\mathcal V$ is an affine subspace of the space of all entire complex functions with $\dim \mathcal V = n + m + 1$. Let us denote by $\mathcal V_{\lambda_0}^{n + m}$ the subset of $\mathcal V$ of those functions $\Delta$ admitting $\lambda_0$ as a root of multiplicity at least $n + m$, i.e.,
\begin{multline*}
\mathcal V_{\lambda_0}^{n + m} = \{\Delta \in \mathcal V \suchthat \Delta^{(k)}(\lambda_0) = 0 \text{ for all } \\ k \in \{0, \dotsc, n + m - 1\}\}.
\end{multline*}
All equations $\Delta^{(k)}(\lambda_0) = 0$, $k \in \{0, \dotsc, n + m - 1\}$, are linearly independent, and thus $\mathcal V_{\lambda_0}^{n + m}$ is a subspace of $\mathcal V$ of codimension $n + m$, i.e., $\dim \mathcal V_{\lambda_0}^{n + m} = 1$.

Introduce now $\mathcal W_{\lambda_0}^{n, m}$ as the space of all functions $\Delta$ of the form \eqref{eq:Delta-max-mult} for some $A \in \mathbb R$. Clearly, $\mathcal W_{\lambda_0}^{n, m}$ is an affine subspace of the space of all entire complex functions with $\dim \mathcal W_{\lambda_0}^{n, m} = 1$.

As a first step, we will prove that $\mathcal W_{\lambda_0}^{n, m} \subset \mathcal V$, i.e., that every function $\Delta$ of the form \eqref{eq:Delta-max-mult} is indeed a quasipolynomial of the form \eqref{eq:Delta-P0-Ptau}--\eqref{eq:quasi-PQ}. To do so, we first notice that, by an immediate inductive integration by parts, we have (see also \cite[Proposition~2.1]{MBN-2021-JDE})
\begin{equation}
\label{eq:integral-p-exp}
\int_0^1 p(t) \EXP{-z t} \diff t = \sum_{k = 0}^d \frac{p^{(k)}(0) - p^{(k)}(1) \EXP{-z}}{z^{k+1}}
\end{equation}
for every $z \in \mathbb C \setminus \{0\}$, $d \in \mathbb N$, and $p$ a polynomial of degree $d$. Now, let $\Delta \in \mathcal W_{\lambda_0}^{n, m}$ and $A \in \mathbb R$ be such that $\Delta$ is given by \eqref{eq:Delta-max-mult}. By using \eqref{eq:integral-p-exp}, we deduce that
\begin{multline*}
\Delta(\lambda) = \sum_{k = 0}^{n + m - 1} \tau^{k-n} (\lambda - \lambda_0)^k p^{(n + m - k - 1)}(0) \\ - \EXP{-\tau (\lambda - \lambda_0)} \sum_{k = 0}^{n + m - 1} \tau^{k-n} (\lambda - \lambda_0)^k p^{(n + m - k - 1)}(1),
\end{multline*}
where $p$ is the polynomial $p(t) = t^{m-1} (1-t)^{n-1} (1 - A t)$. In particular, we have $p(0) = p'(0) = \dotsb = p^{(m-2)}(0) = 0$, $p(1) = p'(1) = \dotsb = p^{(n-2)}(1) = 0$. In addition, we have $p^{(m-1)}(0) = 1$. Hence
\begin{multline*}
\Delta(\lambda) =  (\lambda - \lambda_0)^n + \sum_{k = 0}^{n-1} \tau^{k-n} (\lambda - \lambda_0)^k p^{(n + m - k - 1)}(0) \\ - \EXP{-\tau (\lambda - \lambda_0)} \sum_{k = 0}^{m} \tau^{k-n} (\lambda - \lambda_0)^k p^{(n + m - k - 1)}(1),
\end{multline*}
and thus, as required, $\Delta \in \mathcal V$.

We now notice that $\mathcal W_{\lambda_0}^{n, m} \subset \mathcal V_{\lambda_0}^{n + m}$, since, for any $\Delta$ given by \eqref{eq:Delta-max-mult}, $\lambda_0$ is clearly a root of multiplicity at least $n + m$ of $\Delta$. Since $\mathcal W_{\lambda_0}^{n, m}$ and $\mathcal V_{\lambda_0}^{n + m}$ are both affine spaces of dimension $1$, we conclude that $\mathcal W_{\lambda_0}^{n, m} = \mathcal V_{\lambda_0}^{n + m}$, yielding the conclusion.
\end{proof}

\subsection{MID Validity for the Intermediate Multiplicity $M(\lambda_0)=n+m$}

Finally, from Theorem~\ref{thm:max-mult}, we are able to provide some (appropriate) \emph{sufficient conditions\/} under which we have the MID property for characteristic roots of multiplicity $n + m$ of $\Delta$.

\begin{theorem}
\label{thm:MID}
Let $\tau > 0$, $\lambda_0$ and $A$ be real numbers, and $\Delta$ be given by \eqref{eq:Delta-max-mult}. Consider the parameter vector
\[\Vec p = \left(m, n + m, \frac{(1-A) (n-1)!}{(n + m - 1)!}, \frac{A n!}{(n + m)!}\right)\]
and let $F$ and $G$ be defined respectively as in \eqref{eq:generalized-Kummer} and  \eqref{eq:def-G}. Assume that, for every $t \in (0, 1)$ and every root $z$ of $F(\cdot, \Vec p)$ in $\mathbb C_-$, we have $\Re[z G(t z, \Vec p)] \geq 0$. Then $\lambda_0$ is a dominant root of $\Delta$, i.e., $\lambda_0$ satisfies the MID property.
\end{theorem}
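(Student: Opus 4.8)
The plan is to factor $\Delta$ through the Kummer-type function $F(\cdot,\Vec p)$, to translate dominancy of $\lambda_0$ into the absence of zeros of $F(\cdot,\Vec p)$ in $\mathbb C_-$, and then to rule out such zeros by the Green--Hille identity \eqref{Green} applied to the Whittaker-type equation \eqref{eq:Whittaker-type-edo}.

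\emph{Reduction.} In \eqref{eq:Delta-max-mult} I would write $1 - At = (1-A) + A(1-t)$, which splits the integral into one with weight $t^{m-1}(1-t)^{n-1}$ and one with weight $t^{m-1}(1-t)^{n}$. Since $m\geq 1$ and $n\geq 1$, the integral representation \eqref{eq:integral-Kummer} turns these into constant multiples of $\Phi(m,n+m,-\tau(\lambda-\lambda_0))$ and $\Phi(m,n+m+1,-\tau(\lambda-\lambda_0))$ respectively; collecting the $\Gamma$-factors — which is exactly how the entries of $\Vec p$ in the statement arise — gives
\[\Delta(\lambda) = \tau^{m}(\lambda-\lambda_0)^{n+m}\,F\!\left(-\tau(\lambda-\lambda_0),\Vec p\right).\]
As $F(\cdot,\Vec p)$ is entire and $n+m\geq 2$, this already yields $M(\lambda_0)\geq n+m>1$, and the characteristic roots of $\Delta$ other than $\lambda_0$ are precisely the numbers $\lambda_0 - z/\tau$ with $z$ a nonzero zero of $F(\cdot,\Vec p)$. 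Since $\tau>0$, one has $\Re(\lambda_0 - z/\tau)\leq\lambda_0$ if and only if $\Re(z)\geq 0$, so $\lambda_0$ satisfies the MID property exactly when $F(\cdot,\Vec p)$ has no zero in $\mathbb C_-$.

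\emph{Green--Hille contradiction.} Suppose $F(\cdot,\Vec p)$ vanished at some $z_0\in\mathbb C_-$; then so would the Whittaker-type function $W(\cdot,\Vec p)=\EXP{\mathcal Q(\cdot,\Vec p)}F(\cdot,\Vec p)$ of \eqref{eq:generalized-Whittaker}, a nontrivial solution of the first-order-term-free equation \eqref{eq:Whittaker-type-edo}. I would apply \eqref{Green} with $K\equiv 1$, $\varphi_1=W$, $\varphi_2=W'$, $G=G(\cdot,\Vec p)$, integrating along the segment $z=tz_0$, $t\in[0,1]$: the boundary term at $z_0$ vanishes because $W(z_0,\Vec p)=0$, and the one at $0$ vanishes because $b=n+m\geq 2$ makes $\EXP{\mathcal Q}$ vanish to order $(b+1)/2$ at the branch point, so that both $W$ and $W'$ tend to $0$ there (and the integrands are integrable near $t=0$). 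Taking real parts leaves
\[\int_0^1 \abs{W(tz_0,\Vec p)}^2\,\Re\!\left[z_0\,G(tz_0,\Vec p)\right]\diff t = \Re(z_0)\int_0^1 \abs{W'(tz_0,\Vec p)}^2\diff t.\]
Since $z_0$ is a simple zero of the nontrivial solution $W(\cdot,\Vec p)$ of a second-order linear equation, $W'(z_0,\Vec p)\neq 0$, so the right-hand side equals $\Re(z_0)$ times a strictly positive number and is negative; whereas the hypothesis, applied to the root $z=z_0\in\mathbb C_-$, forces $\Re[z_0\,G(tz_0,\Vec p)]\geq 0$ for $t\in(0,1)$, making the left-hand side nonnegative. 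This contradiction shows $F(\cdot,\Vec p)$ has no zero in $\mathbb C_-$, hence $\lambda_0$ is dominant.

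\emph{Main obstacle.} The delicate point is the singular set of $Q$, $R$, $G$ — and hence of $W$ — which consists of the origin and the unique zero $z_D$ of the affine polynomial $D(\cdot,\Vec p)$; note $z_D$ is real because $\Vec p$ is real. The singularity at $0$ is harmless, by the order count above. The one at $z_D$ is not: near it $\EXP{\mathcal Q}$ grows like $(z-z_D)^{-1/2}$, so if the segment from $0$ to $z_0$ were to cross $z_D$ the integrals in \eqref{Green} would diverge. This never happens when $z_0\notin\mathbb R$, nor when $z_0$ is real and lies between $z_D$ and $0$; but a real zero $z_0$ of $F(\cdot,\Vec p)$ beyond $z_D$ requires separate handling — deforming the path around $z_D$ and controlling the residue-type contribution, or analyzing $F(\cdot,\Vec p)$ directly on the negative real axis. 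I expect this residual case to be the step needing the most care; the theorem's hypothesis is tailored precisely so that the straight-segment computation closes in all the remaining situations.
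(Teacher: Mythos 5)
Your proof follows essentially the same route as the paper's: the identity $1-At=(1-A)+A(1-t)$ together with \eqref{eq:integral-Kummer} gives the factorization $\Delta(\lambda)=\tau^{m}(\lambda-\lambda_0)^{n+m}F(-\tau(\lambda-\lambda_0),\Vec p)$, and the Green--Hille identity along the straight segment from $0$ to a hypothetical zero $z_0\in\mathbb C_-$ of $F(\cdot,\Vec p)$, after taking real parts, yields exactly the paper's contradiction between a negative left-hand side and a nonnegative right-hand side. The only divergence is that you are \emph{more} careful than the published argument, which silently assumes the boundary terms vanish and never mentions the pole of $G(\cdot,\Vec p)$ at the real zero of $D(\cdot,\Vec p)$; the residual case you flag (a real negative zero of $F(\cdot,\Vec p)$ lying beyond that pole on the integration segment) is a genuine subtlety that the paper's own proof also leaves untreated, so it is not a defect of your argument relative to theirs.
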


\begin{proof}
By using the trivial identity $1 - A t = 1 - A + A(1 - t)$, one obtains from \eqref{eq:integral-Kummer}, \eqref{eq:generalized-Kummer}, and \eqref{eq:Delta-max-mult} that 
\begin{equation}
\label{eq:Delta-F}
\Delta(\lambda) = \tau^{m} (\lambda - \lambda_0)^{n + m} F(-\tau(\lambda - \lambda_0), \Vec p).
\end{equation}
In particular, all roots of $\Delta$ different from $\lambda_0$ are roots of $\lambda \mapsto F(-\tau(\lambda - \lambda_0), \Vec p)$, and the result is thus proved if one shows that all roots of the Kummer-type function $F(\cdot, \Vec p)$ have nonnegative real part.

To do so, we consider the Whittaker-type function $W(\cdot, \Vec p)$ from \eqref{eq:generalized-Whittaker}. Applying Hille's method to \eqref{eq:Whittaker-type-edo}, by taking in \eqref{eq:Hille} $z_1 = 0$ and $z_2$ equal to a root $z_\ast$ of $F(\cdot, \Vec p)$, we obtain:
\[
\int_0^{z_\ast} \abs*{W'(z)}^2 \overline{\diff z} = \int_0^{z_\ast} \abs*{W(z)}^2 G(z) \diff z, 
\]
where, for the sake of simplicity, we omit the dependence of $W$ and $G$ on $\Vec p$. We choose as integration path the line segment from $0$ to $z_\ast$. Hence
\[
\overline{z_\ast} \int_0^1 \abs*{W'(t z_\ast)}^2 \diff t = z_\ast \int_0^1 \abs*{W(t z_\ast)}^2 G(t z_\ast) \diff t.
\]
Taking the real part, we get
\begin{equation}
\label{eq:proof-Hille}
x_\ast \int_0^1 \abs*{W'(t z_\ast)}^2 \diff t = \int_0^1 \abs*{W(t z_\ast)}^2 \Re\left[z_\ast G(t z_\ast)\right] \diff t,
\end{equation}
where $x_\ast = \Re(z_\ast)$ and $y_\ast = \Im(z_\ast)$.

Assume now, by contradiction, that $F(\cdot, \Vec p)$ admits a root with negative real part, and take $z_\ast$ in \eqref{eq:proof-Hille} as equal to this root. The left-hand side of \eqref{eq:proof-Hille} is negative, however its right-hand side is nonnegative by assumption, yielding the desired contradiction. Hence all roots of $F(\cdot, \Vec p)$ have nonnegative real parts, yielding the conclusion.
\end{proof}

\subsection{DDEs Frequency Bound in the Right Half-Plane}

The main difficulty when applying Theorem~\ref{thm:MID} is to verify the technical assumption $\Re[z G(t z, \Vec p)] \geq 0$ for every $t \in (0, 1)$ and every root $z$ of $F(\cdot, \Vec p)$ in $\mathbb C_-$ or, equivalently, to verify that $\Re[z G(- t z, \Vec p)] \leq 0$ for every $t \in (0, 1)$ and every root $z$ of $z \mapsto \Delta(\lambda_0 + \frac{z}{\tau})$ in $\mathbb C_+$. For that purpose, a useful technique is to establish a priori information on the location of roots of $\Delta$ with real part greater than $\lambda_0$, and in particular bounds on their imaginary parts.

To do so, a standard first step is to introduce the normalized quasipolynomial $\tilde\Delta(z) = \tau^n \Delta(\lambda_0 + \frac{z}{\tau})$, which can be written as $\tilde\Delta(z) = \tilde P_0(z) + \EXP{-z} \tilde P_\tau(z)$ for some suitable polynomials $\tilde P_0$ and $\tilde P_\tau$ of degrees $n$ and $m$, respectively. Hence, the problem of studying eventual roots of $\Delta$ with real part greater than $\lambda_0$ reduces to the study of eventual roots of $\tilde\Delta$ with positive real part.

A possible strategy to do so is to follow ideas similar to those of Remark~\ref{remk:imaginary}, i.e., to notice that any root $z$ of $\tilde\Delta$ satisfies
\[\abs{\tilde{P_0} (x + \I \omega)}^2 \EXP{2x} = \abs{\tilde{P_\tau}(x + \I \omega)}^2,\]
where $x = \Re(z)$ and $\omega = \Im(z)$. In particular, if $z$ has nonnegative real part, then $\EXP{2x} \geq T_\ell(x)$, where, for $\ell \in \mathbb N$, the polynomial $T_\ell$ is the truncation of the Taylor expansion of $\EXP{2x}$ at order $\ell$, i.e., $T_\ell(x) = \sum_{k = 0}^\ell \frac{(2x)^\ell}{\ell !}$. Hence, any root $z = x + \I \omega$ of $\tilde\Delta$ with nonnegative real part satisfies
\[\mathcal F(x, \omega) \geq 0,\]
where $\mathcal F$ is the polynomial given by
\[\mathcal F(x, \omega) = \abs{\tilde{P_\tau}(x + \I \omega)}^2 - \abs{\tilde{P_0} (x + \I \omega)}^2 T_\ell(x).\]
In addition, $\mathcal F$ only depends on $\omega$ through $\omega^2$ (which is a consequence of the fact that $\tilde P_0$ and $\tilde P_\tau$ are polynomials with real coefficients), and one may thus introduce the variable $\Omega = \omega^2$ and define the polynomial $H$ by setting $H(x, \Omega) = F(x, \sqrt{\Omega})$ for $\Omega \geq 0$. Hence, any root $z = x + \I \omega$ of $\tilde\Delta$ with nonnegative real part satisfies
\begin{equation}
\label{eq:bound-imag-H}
H(x, \Omega) \geq 0,
\end{equation}
where $\Omega = \omega^2$. One can thus establish a bound on the imaginary parts of roots of $\tilde\Delta$ by exploiting the polynomial inequality \eqref{eq:bound-imag-H}. This has been done for some low-order cases in \cite{benarab2022multiplicity, Benarab2020MID}. In particular, all these works have shown that it is sufficient to bound the absolute value of the imaginary parts of the roots in the right half-plane by $\pi$, as one can in general easily exclude by other arguments, such as those from Theorem~\ref{thm:MID}, the possibility of having roots in the right-half plane with imaginary part at most $\pi$, thus concluding the proof of dominance of $\lambda_0$.

The procedure described in this subsection is synthetized in Algorithm~\ref{algo:s} (see \cite{benarab2022multiplicity}), in which one increases the order of the Taylor expansion of $\EXP{2x}$ until a suitable bound is found.

\begin{algorithm}[ht] 
\algsetup{linenosize=\tiny} 
\KwInput{$\tilde{\Delta}(z)= \tilde{P_0}(z)+ \tilde{P_\tau}(z)\, \EXP{-z};$ \tcp{Normalized quasipolynomial}}
\KwInput{\texttt{maxOrd};  \tcp{Maximal order}}

\tcp{Initialization}

$\texttt{ord}=0$;
\tcp{\texttt{ord}: order of truncation of the Taylor expansion of $\EXP{2\,x}$;}

$\texttt{dominance} = \texttt{false}$;

\While{(not \texttt{dominance}) and ($\texttt{ord} \leq \texttt{maxOrd}$)}{
    Set $\mathcal{F}(x,\omega) = \abs{\tilde{P_\tau} (x+\I \omega)}^2 - \abs{\tilde{P_0} (x+\I \omega)}^2  T_{\texttt{ord}}(x)$;
    
    \tcp{$T_{\texttt{ord}}(x)$: Taylor expansion of $\EXP{2x}$ of $\text{order} = \texttt{ord}$}
    
    Set $H(x,\Omega) = \mathcal{F}(x, \sqrt{\Omega})$;
    \tcp{$H$ is a polynomial}
    
    Set $\Omega_{k}(x)$ as the $k$-th real root of $H(x, \cdot)$;
    
    \If{$\displaystyle \sup_{x \geq 0} \max_{k} \Omega_k(x) \leq \pi^2$}{
        $\texttt{dominance} = \texttt{true}$;
    }
    
    $\texttt{ord} = \texttt{ord}+1$;
}

\KwOutput{Frequency bound: If \texttt{dominance} is \texttt{true}, then $\abs{\omega} \leq \pi$ for every root of $\tilde\Delta$ with positive real part;}
\caption{Estimation of a frequency bound for time-delay differential equations with a single delay}
\label{algo:s}
\end{algorithm}

\section{An Illustrative Example: GMID, Intermediate MID, and Pendulum Stabilization}
\label{sec:exm-MID-int-mult}

Consider the dynamical system modeling a friction-free classical pendulum \cite{A1999}, whose dynamics are are governed by the following second-order differential equation:
\begin{equation}\label{InvPenNLin}
\ddot \theta(t) + \frac{g}{L}\sin(\theta(t)) = u(t),
\end{equation}
where $\theta(t)$ denotes the angular displacement of the pendulum at time $t$ with respect to the stable equilibrium position, $L$ is the pendulum length, $g$ is the gravitational acceleration, and $u(t)$ is the control input, which stems from an applied external torque.
Assume that the control law is a standard delayed PD controller of the form
\begin{equation}\label{DPD}
u(t)=-k_p\,\theta(t-\tau)-k_d\,\dot\theta(t-\tau),
\end{equation}
with $(k_p,k_d)\in\mathbb{R}^2$. The local stability of the closed-loop system is reduced to the study of the location of the roots of the quasipolynomial 
\begin{equation}
\label{DeltaPendulum}
   \Delta(\lambda)=\lambda^{2}+\frac{g}{L}+\left(k_{d} \lambda +k_{p}\right) {e}^{-\lambda\tau}.
\end{equation}
It is easy to see that $\degree(\Delta)=4$. By applying the GMID property, it follows that the only admissible quadruple root is $\lambda_0 = -\sqrt{2 g / L}$ and it is achieved if the system's parameters $(k_p,k_d,\tau)$ verify $k_{d} = -{{e}^{-2} \sqrt{2 g / L}},\quad k_{p} = -{5 \,{e}^{-2} g}/{L},\quad\tau = \sqrt{2 L/{g}}$.

As precised in \cite{BMN-2022-CRAS}, the GMID does not allow any degree of freedom in assigning $\lambda_0$. In order to allow for some additional freedom when assigning $\lambda_0$, one can relax such a constraint by forcing the root $\lambda_0$ to have a multiplicity lower than the maximal, and consider, for instance, the delay as a \emph{free tuning parameter}. This motivates the study of a (non-generic) MID property, which was carried out, for instance, in \cite{Boussaada2020Multiplicity} for second-order systems, see  also \cite{BMN-2022-CRAS}.

\begin{figure}[ht]
\centering
\resizebox{0.9\columnwidth}{!}{\input{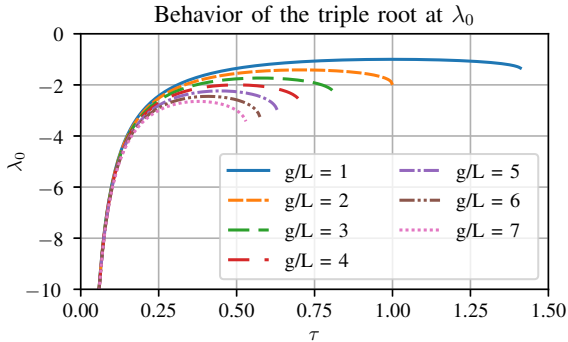}}
\vspace*{-1em}
\caption{The behavior of the triple root (spectral abscissa) of \eqref{DeltaPendulum} at $\lambda=\lambda_0$ given by \eqref{lambda0} as a function of the free delay parameter $0<\tau<\sqrt{2\,L/g}$ for $g/L \in \{1,\,\dotsc,\,7\}$. Clearly, increasing the ratio $g/L$ decreases the assignment region as well as the delay margin.}
\label{Pendulum-TRIPLE}
\end{figure} 

\begin{proposition} \label{m3}
For any $0<\tau < \sqrt{{2\,L}/{g}}$, let
\begin{equation}\label{lambda0}
    \lambda_0=\frac{-2+\sqrt{-\frac{g \,\tau^{2}}{L}+2}}{\tau}.
\end{equation}
The delayed PD controller \eqref{DPD} with
\begin{equation}
\label{eq:kd-kp}
 \begin{aligned}
k_{d} = 
\frac{2 \left(\tau  \lambda_{0}+1\right) {\mathrm e}^{\tau  \lambda_{0}}}{\tau},\; 
k_{p} = 
\frac{2 \left(5 L \tau  \lambda_{0}+g \,\tau^{2}+3 L \right) {\mathrm e}^{\tau  \lambda_{0}}}{\tau^{2} L}
 \end{aligned}   
 \end{equation}
and $\lambda_0\,\tau\geq-1$, locally exponentially stabilizes the system \eqref{InvPenNLin}. Furthermore, the intermediate MID property holds with an exponential decay rate $\lambda_0$ for the closed-loop system.
\end{proposition}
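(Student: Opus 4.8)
The plan is to verify separately the two ingredients of the intermediate MID property: that $\lambda_0$ in \eqref{lambda0} is a root of multiplicity exactly $n+m=3$ of the quasipolynomial $\Delta$ in \eqref{DeltaPendulum} under the gains \eqref{eq:kd-kp}, and that, when $\lambda_0\tau\ge-1$, this root is dominant. For the multiplicity statement I would apply Theorem~\ref{thm:max-mult} with $n=2$ and $m=1$ (so $\degree(\Delta)=4$): it asserts that $\lambda_0$ is a root of $\Delta$ of multiplicity at least $3$ iff $\Delta(\lambda)=\tau(\lambda-\lambda_0)^3\int_0^1(1-t)(1-At)\EXP{-t\tau(\lambda-\lambda_0)}\diff t$ for some $A\in\mathbb R$. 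Expanding this integral via \eqref{eq:integral-p-exp} with $p(t)=(1-t)(1-At)$ and splitting the part free of $\EXP{-\lambda\tau}$ from the rest, one recovers $P_0$ and $P_\tau$ as explicit polynomials of degrees $2$ and $1$ with coefficients affine in $A$, $\lambda_0$ and $1/\tau$. Matching $P_0$ with $\lambda^2+g/L$ forces $A=-1-2\lambda_0\tau$ from the coefficient of $\lambda$ and the scalar relation $(\lambda_0\tau)^2+4\lambda_0\tau+2+g\tau^2/L=0$ from the constant term; its discriminant is $4(2-g\tau^2/L)$, positive precisely when $0<\tau<\sqrt{2L/g}$, and its larger root is \eqref{lambda0}. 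Inserting this $A$ and $\lambda_0$ and matching $P_\tau$ with $k_d\lambda+k_p$ then gives \eqref{eq:kd-kp}, using the scalar relation once more to simplify $k_p$. Since $\tau<\sqrt{2L/g}$ is strict, $\lambda_0$ cannot be the quadruple root of the generic MID regime (which forces $\tau=\sqrt{2L/g}$), so $M(\lambda_0)=3>1$.

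For dominance I would invoke Theorem~\ref{thm:MID} with the same $n,m$ and $A=-1-2\lambda_0\tau$, which yields $\Vec p=\bigl(1,\,3,\,\tfrac{1-A}{2},\,\tfrac{A}{3}\bigr)$ and the Kummer-type function $F(z,\Vec p)=\tfrac{1-A}{2}\Phi(1,3,z)+\tfrac{A}{3}\Phi(1,4,z)$. By \eqref{eq:Delta-F} every root of $\Delta$ other than $\lambda_0$ has the form $\lambda_0-z/\tau$ with $F(z,\Vec p)=0$, so $\lambda_0$ is dominant as soon as $F(\cdot,\Vec p)$ has no zero in $\mathbb C_-$. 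As in the proof of Theorem~\ref{thm:MID}, this reduces to the Hille sign condition $\Re[z\,G(tz,\Vec p)]\ge0$ for every $t\in(0,1)$ and every zero $z$ of $F(\cdot,\Vec p)$ in $\mathbb C_-$, with $G(\cdot,\Vec p)$ read off from \eqref{eq:def-Q}--\eqref{eq:def-G}, or equivalently $\Re[z\,G(-tz,\Vec p)]\le0$ for $t\in(0,1)$ and $z$ a zero of $z\mapsto\tilde\Delta(z)=\tau^2\Delta(\lambda_0+z/\tau)$ in $\mathbb C_+$.

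To verify that condition, I would first bound the imaginary parts of the roots of $\tilde\Delta=\tilde P_0+\EXP{-z}\tilde P_\tau$ (with $\tilde P_0$ of degree $2$, $\tilde P_\tau$ of degree $1$) in the closed right half-plane: a root $z=x+\I\omega$ with $x\ge0$ satisfies $\abs{\tilde P_0(x+\I\omega)}^2\EXP{2x}=\abs{\tilde P_\tau(x+\I\omega)}^2$, hence $H(x,\omega^2)\ge0$ for the polynomial $H$ built from a truncated Taylor expansion of $\EXP{2x}$ as in Algorithm~\ref{algo:s}. Here the hypothesis $\lambda_0\tau\ge-1$ — equivalent to $\tau\le\sqrt{L/g}$ and to $1-A=2(1+\lambda_0\tau)\ge0$, hence to $k_d\ge0$ — is what keeps the coefficients of $\tilde P_0,\tilde P_\tau$ in a range where Algorithm~\ref{algo:s} terminates and returns $\abs{\omega}\le\pi$. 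On the compact set $\{x\ge0,\ \abs{\omega}\le\pi\}$ one then checks directly, from the explicit rational form of $G(\cdot,\Vec p)$, that $\Re[z\,G(-tz,\Vec p)]\le0$ for all $t\in(0,1)$, so Theorem~\ref{thm:MID} makes $\lambda_0$ the spectral abscissa of $\Delta$. Since $\lambda_0\tau=-2+\sqrt{2-g\tau^2/L}<\sqrt2-2<0$, this dominant root is strictly negative, all characteristic roots lie in $\{\Re\lambda\le\lambda_0\}\subset\mathbb C_-$, and the principle of linearized stability for retarded DDEs gives local exponential stability of \eqref{InvPenNLin}--\eqref{DPD} with decay rate $\abs{\lambda_0}$ — the intermediate MID property with rate $\lambda_0$.

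I expect the main obstacle to lie in this last part of the dominance step: obtaining the imaginary-part bound $\abs{\omega}\le\pi$ via Algorithm~\ref{algo:s} — which is not guaranteed to terminate for arbitrary parameters and is precisely where the assumption $\lambda_0\tau\ge-1$ enters — and then carrying out the sign analysis of $\Re[z\,G(-tz,\Vec p)]$ over the strip. Both are computational rather than conceptual difficulties, the sign analysis of the rational function $G(\cdot,\Vec p)$ being the more delicate.
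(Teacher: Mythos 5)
Your first half --- identifying $\lambda_0$ and the gains by matching the integral representation of Theorem~\ref{thm:max-mult} with $n=2$, $m=1$, $p(t)=(1-t)(1-At)$ and $A=-1-2\lambda_0\tau$ --- is correct and consistent with the paper, which works with the normalized quasipolynomial $\tilde\Delta(z)=z^3\int_0^1 q_\mu(t)\EXP{-zt}\diff t$, $q_\mu(t)=-(1+2\mu)t^2+2\mu t+1$, $\mu=\tau\lambda_0$; your scalar relation $\mu^2+4\mu+2+g\tau^2/L=0$ and the choice of its larger root reproduce \eqref{lambda0}.

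The gap is in the dominance step. You route it through Theorem~\ref{thm:MID}, which requires verifying $\Re[z\,G(tz,\Vec p)]\geq 0$ for all $t\in(0,1)$ and all zeros $z$ of $F(\cdot,\Vec p)$ in $\mathbb C_-$, and you simply assert that ``one then checks directly'' this sign condition on the strip $\{x\geq 0,\ \abs{\omega}\leq\pi\}$ (which, incidentally, is not compact). That verification is precisely the hard part --- the paper itself flags it as ``the main difficulty when applying Theorem~\ref{thm:MID}'' --- and you give no argument, nor any reason to believe the rational function $G(\cdot,\Vec p)$ has the required sign here. The paper's proof avoids Theorem~\ref{thm:MID} entirely for this step: after obtaining the frequency bound via Algorithm~\ref{algo:s} (truncation order $1$, discriminant analysis of $H_\mu$ giving $\omega^2<\Omega^+(x^\ast)\approx 3.003<\pi^2$ for $\mu\in[-1,-2+\sqrt2]$), it assumes a root $z_0=x+\I\omega$ with $x,\omega>0$, takes the imaginary part of $\int_0^1 q_\mu(t)\EXP{-tz_0}\diff t=0$ to get $\int_0^1 q_\mu(t)\EXP{-tx}\sin(\omega t)\diff t=0$, and derives a contradiction because $0<\omega\leq\pi$ makes $\sin(\omega t)>0$ on $(0,1)$ while $q_\mu>0$ there. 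This also pins down the true role of the hypothesis $\lambda_0\tau\geq -1$: it is exactly the condition $\mu\in[-1,0)$ under which $q_\mu$ keeps a constant (positive) sign on $(0,1)$, which your proposal attributes only to the termination of Algorithm~\ref{algo:s}. To close your argument you would either have to actually establish the sign of $\Re[z\,G(-tz,\Vec p)]$ --- a delicate and possibly false claim --- or replace that step by the paper's direct positivity argument on the integrand.
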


\begin{proof}
The normalization $\tilde\Delta(z) = \tau^2 \Delta(\lambda_0 + \frac{z}{\tau})$ of $\Delta$ from \eqref{DeltaPendulum} with \eqref{lambda0}--\eqref{eq:kd-kp} is
\begin{equation*}
    \tilde{\Delta}(z)=\left(  \left( 2\,\mu+2 \right) z+4\,\mu+2 \right) {{\rm e}^{-z}}+{z}^{2}+2\,
\mu\,z-4\,\mu-2,
\end{equation*}
where $\mu=\tau\,\lambda_0$. The integral representation \eqref{eq:Delta-max-mult} of $\tilde\Delta$ is
\begin{equation*}
\tilde{\Delta}(z) = {z}^{3}\int_{0}^{1}\! q_\mu (t)\, {{\rm e}^{-zt}}\diff t,
\end{equation*}
where $q_\mu(t) = \left( -1-2\,\mu \right) {t}^{2}+2\,\mu\,t+1$, and it can be further written as a combination of two Kummer functions as
\[\tilde{\Delta}(z)=z^3\,[ 2\left( 1+ \mu \right) {\Phi\left(1,\,3,\,-z\right)} - \left(1+2 \,\mu \right) {\Phi\left(1,\,4,\,-z\right)}].\]
Note that $q_\mu$ keeps a constant sign for $t \in (0, 1)$ if and only if $\mu \in [-1,0)$. Following the steps of Algorithm~\ref{algo:s} and considering a truncation of order $1$ of the Taylor series of $\EXP{2x}$, the corresponding polynomial $H$, denoted here by $H_\mu$, is
\begin{align} \label{polyH}
H_\mu(x,\,&  \Omega) =  - \left( 1+2x \right) {\Omega}^{2}-2x \bigl(2{x}^{2}+ \left( 4\mu+1 \right) x+4{\mu}^{2}\nonumber\\
& +10\mu+4 \bigr) \Omega -2{x}^{5}+ \left( -8\mu-1 \right) {x}^{4}\nonumber\\
& -4 \left( 2\mu+1 \right)  \left( \mu-2 \right) {x}^{3}+8 \left( 2\mu+1 \right) ^{2}{x}^{2}.
\end{align}
The discriminant of $H_\mu$ with respect to the variable $\Omega$ is $\tilde{D}_\mu(x)=x^2\,D_\mu(x)$, where
 \begin{align*}
     &D_\mu(x)=\left( 64\,{\mu}^{2}+256\,\mu+128 \right) {x}^{2}+ \left( 128\,{\mu}^
{3}+576\,{\mu}^{2}\right.\\
&\left.+512\,\mu+128 \right) x+64\,{\mu}^{4}+320\,{\mu}^{3}
+656\,{\mu}^{2}+448\,\mu+96.
 \end{align*}
One easily checks that $D_\mu$ is positive only under the condition $\mu \in [-2-\sqrt{2},-2+\sqrt{2}]$ for $x>0$. In such a case, the polynomial function $H_{\mu}$ admits two real roots, denoted by $\Omega^\pm_\mu$, where $\Omega^+_\mu$ denotes the greater solution. We consider from now on $\mu \in [-1,-2+\sqrt{2}]$, which guarantees simultaneously that $q_\mu$ has a constant sign and $D_\mu$ is positive. In this case, the solution $\Omega_{\mu}^+$ is upper-bounded by 
    \begin{equation*}
        \Omega^+(x)= \frac {x}{1+2\,x} \left( -2\,{x}^{2}+3\,x+2+2\,\sqrt {4\,x+3} \right)
    \end{equation*}
    which depends only on $x$ and reaches its maximum at $x^*\approx 1.446$. Thus, ${\omega}^2=\Omega_{\mu}^+(x)<\Omega^+ (x^*)\approx 3.003<\pi^2$, i.e., $\omega<\pi$.
Finally, reasoning by contradiction, one assumes that there exists an unstable root $z_0=x+\I\,\omega \in \R^+ +\I\, \R^+$ of $\tilde\Delta$. Then, the integral representation yields 
$\int_{0}^{1} q_\mu(t)\, \EXP{-t\,z_0}
\diff t=0$ and, taking the imaginary part, we get
$\int_{0}^{1} q_\mu (t) \, \EXP{-t\,x} \sin (\omega \,t) \diff t=0.$
Now, the frequency bound $0<\omega\leq\pi$ of the previous step entails that the function $t \mapsto q_\mu (t)$ is strictly positive in $(0, 1)$, thereby contradicting the last equality. This ends the proof.
\end{proof}

It should be mentioned that Proposition~\ref{m3} can be proven by using the argument principle as done in \cite{Boussaada2020Multiplicity}. However, the proof we propose is shorter and constructive.

\begin{remark}
By Proposition~\ref{m3}, the triple root at $\lambda=\lambda_0$ is the rightmost root of \eqref{DeltaPendulum}. Thus the delay, if seen as a tuning  parameter, allows to assign the rightmost root at $\lambda=\lambda_0$ arbitrarily large (in absolute value) for small delay.
\end{remark}

\begin{remark}
A careful reading of the proof of the above result leads to the following interesting observation: the increase of the truncation order of the exponential term (see Algorithm~\ref{algo:s}) allows to enlarge the validity domain on the parameter $\mu$, and as a result this allows to enlarge the rightmost root assignability region.
\end{remark}

\section{Conclusion}
\label{sec:conclusion}

This paper discusses the spectral abscissa of linear time-invariant dynamical systems represented by delay-differential equations. It exploits the existing links between spectral values of intermediate admissible  multiplicity for a quasipolynomials and the distribution of zeros of linear combinations of Kummer confluent hypergeometric functions. It proposes a delayed control design methodology allowing the closed-loop system's solution to obey a prescribed decay rate, opening perspectives in concrete applications including, among others, vibration control (see, e.g., \cite{Boussaada2018Further}). In particular, the proposed methodology is illustrated through the stabilization problem of both the classical and the inverted pendulums.

\bibliographystyle{IEEEtran}
\bibliography{Bib}

\end{document}